\numberwithin{equation}{section}
\newtheorem{Theorem}{Theorem}[section]
\newtheorem{Thm}[Theorem]{Theorem}
\newtheorem{Fact}[Theorem]{Fact}
\newtheorem{Cor}[Theorem]{Corollary}
 \newtheorem{Lemma}[Theorem]{Lemma}
\newtheorem{Proposition}[Theorem]{Proposition}
\newtheorem{Prop}[Theorem]{Proposition}
\theoremstyle{remark}
\newtheorem{Rmk}[Theorem]{Remark}
\theoremstyle{definition}
\newtheorem{Definition}[Theorem]{Definition}
\newtheorem{Example}[Theorem]{Example}
\newtheorem{Exa}[Theorem]{Example}
\newtheorem*{acknowledgements}{Acknowledgements}
\newcommand{\inner}[2]{\left\langle{#1},{#2}\right\rangle}
\newcommand{\R}{{\mathbb R}}
\newcommand{\C}{{\mathbb C}}
\newcommand{\E}{{\mathbb E}}
\newcommand{\mc}[1]{{\mathcal #1}}
\newcommand{\pmt}[1]{{\begin{pmatrix} #1  \end{pmatrix}}}
\renewcommand{\phi}{\varphi}
\renewcommand{\epsilon}{\varepsilon}
\newcommand{\op}[1]{{\operatorname{ #1}}}
\renewcommand{\phi}{\varphi}
\renewcommand{\epsilon}{\varepsilon}
\renewcommand{\phi}{\varphi}
\newcommand{\mathsym}[1]{{}}
\newcommand{\unicode}[1]{{}}
\title{
Hopf differentials and curvature line flows on time-like CMC surfaces
}
\author{Naoya Ando}
\address[Naoya Ando]{%
Department of Mathematics,
Faculty of Advanced Science and Technology, 
Kumamoto University, 2-39-1 Kurokami, Chuo-ku,  Kumamoto 860-8555, Japan
}
\email{andonaoya@kumamoto-u.ac.jp}
\author{Masaaki Umehara}
\address[Masaaki Umehara]{%
   Department of Mathematical and Computing Sciences,
  Institute of Science Tokyo,
   2-12-1-W8-34, O-okayama, Meguro-ku,
   Tokyo 152-8552, Japan.
}
\email{umehara@comp.isct.ac.jp}
\keywords{umbilic, 
curvature line flow,
time-like surface,
Hopf differential, constant mean curvature (CMC)}
\subjclass[2010]{Primary 
 53B30;
Secondary 
53A05. 
}
\begin{document}

\maketitle
\begin{abstract}
We investigate the relationship between the Hopf differentials
and the curvature line flows on time-like constant mean curvature (CMC)
surfaces in Lorentzian 3-space forms.
In particular, when the Hopf differential is non-degenerate,
the index of a curvature line flow at an umbilic point depends
precisely on the remainder of its order modulo four.
\end{abstract}

\section*{Introduction}
We denote by $M^3_1(c)$ 
the (Lorentzian) {\it 3-dimensional space form}
of signature $(++-)$ with 
constant sectional curvature $c\in \R$
(for details, refer to the beginning of Section~2).
An immersion $f:U\to M^3_1(c)$ 
defined on a domain $U$ of the $uv$-plane $\R^2$
is called a {\it regular surface}.
A point $p\in U$ is
called an {\it umbilic} of $f$
if the shape operator at $p$ is a scalar multiple of
the identity transformation. Similarly,
a point $p\in U$ is
called a {\it quasi-umbilic} of $f$
if $p$ is not an umbilic 
but the two principal curvatures coincide at $p$.

If $f$ is space-like,
then quasi-umbilics never
appear, and the principal 
curvatures are real-valued, but this 
may not always be true if $f$ is time-like.
We now assume that $f$ is a time-like regular surface
and fix an umbilic or quasi-umbilic $p\in U$.

\begin{Definition}\label{def:0}
The point $p$ is said to be \emph{admissible} if there exists a neighborhood 
$U_1 \subset U$ of $p$ on which the principal curvatures of $f$ are real-valued. 
If $p$ is an umbilic of $f$, 
then it is said to be \emph{isolated} if it is 
admissible 
and the above neighborhood 
$U_1$ can be chosen so that no other umbilics occur in $U_1$.
\end{Definition}

In the present paper, 
we investigate 
umbilics and quasi-umbilics 
on a time-like CMC surface $f$ in $M^3_1(c)$,  
where a regular surface with constant mean curvature  
will be referred to as a ``CMC surface'' for short.  
The properties of umbilics and quasi-umbilics of $f$
are determined by
its Hopf differential $Q_f$, as in the space-like case.
However, unlike the space-like or Euclidean cases,
several remarkable phenomena occur in the curvature line flows 
of time-like CMC surfaces.
In fact, we will show the following:
\begin{itemize}
\item[(I)]
{\it Any isolated umbilic on a time-like CMC surface $f$
is an accumulation point of the set of quasi-umbilics}.
When a given time-like surface is not of constant 
mean curvature, 
it might admit umbilics that do not become
accumulation points of quasi-umbilics (cf.~\cite{AU}).

\item[(II)]
{\it Suppose that the Hopf differential $Q_f$
of a time-like CMC surface $f$ 
is of finite type $($cf.~Definition~\ref{def:FT}$)$ 
and is non-degenerate at an umbilic $p$ 
$($cf.~Definition~\ref{def:QS}$)$.
Then the order $m(\ge 1)$ of $Q_f$ at $p$ is well defined, 
and the behavior of the curvature line flows of $f$
around $p$ depends on the remainder of $m$ modulo~$4$. 
More precisely, if
$p$ is an isolated umbilic, then
$m$ is even.
Moreover, in this case,  
there exists a smooth curvature line flow of $f$  
with index $1$ or $-1$ at $p$  
if and only if $m$ is even but not divisible by~$4$.}
\end{itemize}
This dependence on the remainder of $m$ modulo~4
suggests a subtle geometric structure
inherent in time-like CMC surfaces.
To contrast the time-like case with the space-like case, 
umbilics of space-like 
CMC surfaces are also investigated in Appendix~A.

The paper is organized as follows:
In Section~1,  
we recall basic facts on time-like regular surfaces.
In Section~2, we define
the Hopf differentials of time-like CMC surfaces and explain their
properties. In Section~3, we study the behavior of 
the curvature line flows of
time-like CMC surfaces in 
neighborhoods of umbilics and quasi-umbilics.
We prepare two appendices: one concerning 
umbilics of space-like CMC surfaces,
and the other containing 
some background on paracomplex geometry.

\section{Preliminaries}

Let $M^3_1$ be a Lorentzian $3$-manifold.  
We fix a domain $U$ in the $uv$-plane $\R^2$, and
consider a time-like surface $f:U\to M^3_1$.
By the well-known existence theorem 
of isothermal coordinates (cf.~\cite{W}),
the first fundamental form of $f$ can be written as
\begin{equation}\label{283}
ds^2=e^{2\sigma}(du^2-dv^2),
\end{equation}
where $\sigma$ is a $C^\infty$ function on $U$.
The $uv$-plane can be identified with the set of paracomplex numbers:
$
\check{\C}:=\{z=u+jv\,;\, u,v\in \R\},
$
where $j$ is the para-imaginary unit (i.e., $j^2=1$). 
The notation 
\begin{equation}\label{eq:N20}
N^2[a+jb]:=a^2-b^2 \qquad (a,b\in \R)
\end{equation}
for a paracomplex number can be regarded as
an analogue of the square of the norm of a complex number, 
and it satisfies
$$
N^2[(a+jb)(c+jd)]=N^2[a+jb]\, N^2[c+jd] \qquad (a,b,c,d\in \R).
$$

Let
$\nu$ be a unit normal vector field of $f$.
The second fundamental form $h:=Ldu^2+2Mdudv+Ndv^2$
is given by
$$
L:=\inner{\nabla_uf_{u}}{\nu},\quad M:=\inner{\nabla_u f_{v}}{\nu},\quad
N:=\inner{\nabla_vf_{v}}{\nu},
$$
where \lq\lq$\inner{\,\,}{\,\,}$" is the Lorentzian metric on $M^3_1$,
$$
f_u:=df(\partial/\partial u),\quad 
f_v:=df(\partial/\partial v),\quad
\nabla_u:=\nabla_{\partial/\partial u},\quad
\nabla_v:=\nabla_{\partial/\partial v},
$$
and $\nabla$ 
is the Levi-Civita connection of $M^3_1$.
The matrix defined by
\begin{equation}\label{eq:W259}
W_f:=
e^{-2\sigma}
\pmt{L & M \\
     -M & -N}
\end{equation}
is called the {\it Weingarten matrix} of $f$,
whose eigenvalues  
$\lambda_1,\lambda_2$ at each point of $U$	
are the {\it principal curvatures} of $f$.
We set 
\begin{equation}\label{eq:274}
D_f:=\op{trace}(W_f)^2-4\det(W_f)
=e^{-4\sigma}
\Big(
(L+N)^2-4M^2
\Big).
\end{equation}

\begin{Rmk}
The trace-free part $A_f$ of $e^{2\sigma}W_f$ can be written as
\begin{equation}\label{eq:Af}
A_f=
\frac12
\pmt{
L+N & 2M \\
-2M & -(L+N)
}.
\end{equation}
It satisfies
\begin{equation}\label{eq:Af2}
D_f=-4 e^{-4\sigma}\det(A_f).
\end{equation}
Hence, the integral curves of an eigenvector field of $A_f$ are
the curvature lines of $f$.
Later, we will introduce 
$Q_f:=\big((L+N)+2jM\big)dz^2$ 
as the Hopf differential
of $f$. 
Then $A_f$ corresponds to the matrix of coefficients of $Q_f$.
\end{Rmk}

\begin{Proposition}\label{thm:633A}
Let $p\in U$ be an isolated umbilic of a time-like surface
$f:U\to M^3_1$, and let
$\mc F$ be a $C^r$ curvature-line flow 
on $U\setminus \{p\}$, where $r\ge 1$.
Then there exists a unique $C^r$ curvature-line flow 
$\mc F^\perp$ on $U\setminus \{p\}$
such that
\begin{enumerate}
\item each leaf of $\mc F^\perp$ is perpendicular to 
those of $\mc F$,
\item the indices $i_{\mc F}$ of $\mc F$ and
$i_{\mc F^\perp}$ of $\mc F^\perp$ at $p$ satisfy
$i_{\mc F^\perp}=-i_{\mc F}$.
\end{enumerate}
\end{Proposition}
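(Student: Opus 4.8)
The plan is to identify $\mc F^\perp$ with the pointwise orthogonal complement of $\mc F$ taken with respect to the first fundamental form $ds^2$, and then to read the index relation off the fact that, in Lorentzian signature, passing to the orthogonal line is an \emph{orientation-reversing} involution of the projective line of tangent directions. The algebraic input I record first is that $A_f$ is self-adjoint with respect to $ds^2$: by \eqref{eq:Af},
\[
\pmt{1&0\\0&-1}A_f=\frac12\pmt{L+N & 2M\\ 2M & L+N}
\]
is symmetric, and $ds^2$ is conformal to the bilinear form with matrix $\op{diag}(1,-1)$. Hence at any point of $U\setminus\{p\}$ with $D_f>0$ the two real eigenlines of $A_f$ — that is, the two curvature-line directions of $f$ — are $ds^2$-perpendicular, while at a quasi-umbilic (where $D_f=0$ but $A_f\ne0$) the unique eigenline of $A_f$ is light-like and hence equals its own orthogonal complement. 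I also note that the existence of the curvature-line flow $\mc F$ on $U\setminus\{p\}$ forces $A_f$ to possess a real eigenvector at each point there, i.e.\ $D_f\ge0$ on $U\setminus\{p\}$, so these two cases are exhaustive.

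Next I would construct $\mc F^\perp$. Locally write $\mc F=\op{span}(X_1\partial_u+X_2\partial_v)$ with $(X_1,X_2)$ a nowhere-vanishing $C^r$ map, and put $\mc F^\perp:=\op{span}(X_2\partial_u+X_1\partial_v)$. The field $X_2\partial_u+X_1\partial_v$ is again nowhere vanishing and $C^r$, it is $ds^2$-perpendicular to $X_1\partial_u+X_2\partial_v$ (the relevant pairing being $X_1X_2-X_2X_1=0$), and $\mc F^\perp$ does not depend on the local choice of $(X_1,X_2)$; thus $\mc F^\perp$ is a well-defined $C^r$ line field on $U\setminus\{p\}$, and it satisfies (1) by construction. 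That $\mc F^\perp$ is a curvature-line flow I would check pointwise at $q\in U\setminus\{p\}$: if $D_f(q)>0$ then $\mc F(q)$ is one of the two eigenlines of $A_f(q)$, and by the first paragraph $\mc F^\perp(q)$ is the other; if $D_f(q)=0$ then $\mc F(q)$ must be the unique, light-like eigenline of $A_f(q)$, and $\mc F^\perp(q)=\mc F(q)$ is still that eigenline. In either case $\mc F^\perp(q)$ is an eigendirection of $A_f(q)$, so its integral curves are curvature lines. Uniqueness is then immediate: for any curvature-line flow $\mc G$ whose leaves are perpendicular to those of $\mc F$, property (1) forces $\mc G(q)=\mc F(q)^\perp=\mc F^\perp(q)$ at every $q$, since in Lorentzian $2$-space every line has a unique orthogonal complement line.

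For the index I would take a small positively oriented loop $\gamma$ around $p$ in $U\setminus\{p\}$ and, in the standard framing $\partial_u,\partial_v$, write the direction of $\mc F$ along $\gamma$ as $(\cos\alpha,\sin\alpha)$ with $\alpha$ a continuous lift; then $i_{\mc F}$ is, in the fixed normalization, the total variation of $\alpha$ around $\gamma$. In the same framing the direction of $\mc F^\perp$ is $(\sin\alpha,\cos\alpha)=(\cos(\tfrac\pi2-\alpha),\sin(\tfrac\pi2-\alpha))$, so $\tfrac\pi2-\alpha$ is a continuous lift of its direction angle, and its total variation around $\gamma$ is the negative of that of $\alpha$; hence $i_{\mc F^\perp}=-i_{\mc F}$. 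It is precisely here that the Lorentzian signature enters: the orthogonal-complement map on directions is the reflection $\alpha\mapsto\tfrac\pi2-\alpha$, not the rotation $\alpha\mapsto\alpha+\tfrac\pi2$ of the Euclidean or space-like setting, and a reflection reverses winding numbers.

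The one step I expect to require genuine care is the verification that $\mc F^\perp$ is a \emph{curvature}-line flow across quasi-umbilics: there the two curvature directions of $f$ coalesce into a single light-like one, so the naive recipe ``$\mc F^\perp=$ the other eigenfield of $A_f$'' breaks down, and one must fall back on the frame description $(X_1,X_2)\mapsto(X_2,X_1)$ together with the self-orthogonality of light-like lines to see that the eigendirection property persists and no regularity is lost. Everything else reduces to the matrix identity above and the elementary geometry of the reflection $\alpha\mapsto\tfrac\pi2-\alpha$ on the projective line of directions.
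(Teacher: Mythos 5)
Your proof is correct and takes essentially the same route as the paper's: in isothermal coordinates one passes from the local frame $(x_1,x_2)$ of $\mc F$ to $(x_2,x_1)$, and the index relation $i_{\mc F^\perp}=-i_{\mc F}$ follows because this swap is orientation-reversing on directions. The only difference is that you spell out, via the self-adjointness of $A_f$ with respect to $\op{diag}(1,-1)$ and the self-orthogonality of the light-like eigenline at quasi-umbilics, why the swapped field is still an eigendirection of $A_f$ -- a step the paper asserts without detail.
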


\begin{proof}
This proposition is stated in the setting 
of an isothermal coordinate system.
In the case of the Ribaucour parameter, the same result was shown
by Ando and Umehara~\cite[Proposition~C]{AU}.
The proof is almost parallel to theirs.
For the readers' convenience, we give a proof here:
We may assume that 
the first fundamental form of $f$ is given as in
\eqref{283}.
We fix $q\in U\setminus \{p\}$ arbitrarily.
Then there exist a neighborhood $V(\subset U\setminus \{p\})$ of 
$q$ and a vector field $X_1$ on $V$
whose integral curves correspond to the leaves of $\mc F$. 
We can write
\[
X=x_1 \frac{\partial}{\partial u}+x_2 \frac{\partial}{\partial v}
\]
on $V$. Since $(u,v)$ is isothermal,
\[
X^\perp=x_2 \frac{\partial}{\partial u}+x_1 \frac{\partial}{\partial v}
\]
also gives a vector field on $V$
which points in an eigen-direction of $W_f$ at each point of $V$,
and therefore the integral curves of $X^\perp$ define 
a curvature-line flow 
$\mc F^\perp$ which is perpendicular to $\mc F$.
Since $q\in U\setminus \{p\}$ is chosen arbitrarily, this curvature-line flow
$\mc F^\perp$ can be extended uniquely on $U\setminus \{p\}$.
Since $(x_1,x_2)\mapsto (x_2,x_1)$ is an orientation-reversing
diffeomorphism,  we have $i_{\mc F^\perp}=-i_{\mc F}$.
\end{proof}

We recall the following:

\begin{Fact}[{\cite[Remark 4.1]{AU}}]\label{fact:329}
Let $a,b,c$ be real numbers, and consider 
two matrices
\[
A:=\pmt{a & c \\ -c & -b},\qquad
B:=\pmt{c & \frac{a+b}{2} \\[2pt] \frac{a+b}{2} & c}.
\]
Then the discriminant $D_A$ of the
eigenequation for the
matrix $A$ satisfies
$D_A=-4\det(B)$.
In this setting,
$A$ is a scalar multiple of
the identity matrix if and only if $B=0$.
Moreover, 
for a given nonzero column vector $\bm v:={}^t(x,y)$ 
$($the transpose of the row vector $(x,y))$, 
the following two properties are equivalent:
\begin{enumerate}
\item $\bm v$ is an eigenvector of $A$;
\item $\bm v$ is a {\it null vector} of $B$, that is,
${}^t{\bm v} B \bm v=0$.
\end{enumerate}
Furthermore, if $\bm v(=x+jy)$ satisfies (1) and (2),
then $N^2(\bm v)=0$ holds if and only if $D_A=0$.
\end{Fact}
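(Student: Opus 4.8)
The plan is to prove the Fact by a completely elementary computation, checking the four assertions in turn. First, expanding $\det(A-\lambda I)$ gives the characteristic polynomial $\lambda^{2}+(b-a)\lambda+(c^{2}-ab)$, whose discriminant is $(b-a)^{2}-4(c^{2}-ab)=(a+b)^{2}-4c^{2}$; since $\det B=c^{2}-\tfrac14(a+b)^{2}$, this equals $-4\det B$, which proves $D_{A}=-4\det B$. For the second assertion, $A$ is a scalar multiple of the identity exactly when its two off-diagonal entries vanish and its two diagonal entries coincide, i.e.\ when $c=0$ and $a=-b$; these are precisely the conditions under which every entry of $B$ vanishes, so $A$ is scalar if and only if $B=0$.

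For the equivalence of (1) and (2), I would use that a nonzero vector $\bm v={}^{t}(x,y)$ is an eigenvector of $A$ if and only if $A\bm v$ is proportional to $\bm v$, that is, the determinant of the $2\times 2$ matrix whose columns are $A\bm v$ and $\bm v$ vanishes. A short computation gives
\[
\det(A\bm v\,|\,\bm v)=c\,x^{2}+(a+b)\,xy+c\,y^{2}={}^{t}\bm v\,B\,\bm v,
\]
where the last equality is immediate from the form of $B$. Hence $\bm v$ is an eigenvector of $A$ precisely when ${}^{t}\bm v\,B\,\bm v=0$, which is the claimed equivalence.

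For the final assertion, let $\bm v$ satisfy (1) and (2), so that $c\,x^{2}+(a+b)\,xy+c\,y^{2}=0$, and recall $N^{2}(\bm v)=x^{2}-y^{2}$. If $c\neq 0$, then $y\neq 0$ (otherwise $c\,x^{2}=0$ forces $\bm v=0$), so $t:=x/y$ is a real root of $t^{2}+\tfrac{a+b}{c}\,t+1=0$ and $N^{2}(\bm v)=y^{2}(t^{2}-1)$; thus $N^{2}(\bm v)=0$ iff $t=\pm 1$. Now $t=1$, respectively $t=-1$, is a root of that quadratic exactly when $a+b=-2c$, respectively $a+b=2c$, i.e.\ when $(a+b)^{2}=4c^{2}$, i.e.\ $D_{A}=0$; conversely, if $D_{A}=0$ the quadratic has a double root, whose square equals the product of the two roots, namely $1$, so again $t=\pm 1$. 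If instead $c=0$ while $B\neq 0$, then $a+b\neq 0$, hence $D_{A}=(a+b)^{2}\neq 0$, whereas every null vector of $B$ has $xy=0$ and therefore $N^{2}(\bm v)\neq 0$; so both sides of the equivalence fail. (The case $c=0$, $B=0$ is the umbilic one, where $A$ is scalar, and is read under the standing assumption that this is excluded.)

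All of this is routine; the only place calling for a little care is the last equivalence, where one must keep track of the normalization of the null direction of $B$ and observe that the statement genuinely concerns the non-umbilic situation $B\neq 0$.
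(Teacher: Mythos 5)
Your proof is correct, and it is essentially the same elementary computational approach as the paper's (which writes out only the final assertion, citing \cite[Remark 4.1]{AU} for the first three). If anything you are more careful than the source: your case analysis at the end explicitly isolates the degenerate situations $c=0$, $B\neq 0$ and $B=0$, the latter being exactly the umbilic case that the paper's phrase ``$B$ is a scalar multiple of\dots'' silently glosses over.
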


\begin{proof}
Here we only prove the last statement.
If $D_A=0$, then $|a+b|/2=|c|$.
Then $B$ is a scalar multiple of
\[
\pmt{1 & 1 \\ 1 & 1} \quad \text{or} \quad 
\pmt{1 & -1 \\ -1 & 1}.
\]
Hence ${}^t{\bm v} B \bm v=0$ implies that $x^2-y^2=0$.
The above argument also shows that the converse holds.
\end{proof}

\section{The Hopf differentials of time-like CMC surfaces}

We denote by $M^3_1(c)$ ($c\in \R$)
the (Lorentzian) {\it 3-dimensional space form}, which is
the Lorentzian 3-manifold of 
constant sectional curvature $c$, as follows:
\begin{itemize}
\item the Lorentz--Minkowski 3-space $\R^3_1$ 
of signature $(++-)$ if $c=0$,
\item the de Sitter 3-space in $\R^4_1$ if $c>0$,
\item the anti-de Sitter 3-space in $\R^4_2$ if $c<0$,
\end{itemize}
where $\R^4_1$ (resp.\ $\R^4_2$) is the affine 4-space with the canonical metric 
of signature $(+++\,-)$ (resp.\ $(++-\,-)$).
In this section we consider a time-like surface $f:(U;u,v)\to M^3_1(c)$
such that $(u,v)$ form an isothermal coordinate system
as in \eqref{283}.

\subsection*{Para-holomorphicity of Hopf differentials}

The notion of para-holomorphicity is the Lorentzian counterpart 
of holomorphicity in the Euclidean case. 
In this sense, the structure of time-like 
CMC surfaces can be viewed as the \lq\lq split-complex" analogue 
of the classical theory of minimal and CMC surfaces in $\E^3$.
We first set $c=0$, that is, $M^3_1(c)=\R^3_1$.
For a time-like surface $f:U\to \R^3_1$
defined on an isothermal coordinate neighborhood in the
$uv$-plane, we have the following Frenet-type equations:
\[
\bm F_u=\bm F\Omega,\qquad \bm F_v=\bm F\Lambda,
\]
where $\bm F=(f_u,f_v,\nu),$ and
\[
\Omega=\pmt{
\sigma_u & \sigma_v & -e^{-2\sigma}L \\
\sigma_v & \sigma_u & e^{-2\sigma}M \\
L & M & 0  
},
\qquad
\Lambda=\pmt{
\sigma_v & \sigma_u & -e^{-2\sigma}M \\
\sigma_u & \sigma_v & e^{-2\sigma}N \\
M & N & 0  
}.
\]
Here $\sigma$ is a smooth function as in
\eqref{283}.
These equations yield the integrability 
condition
\[
\Lambda \Omega+\Omega_v=\Omega\Lambda+\Lambda_u
\]
on $U$.
Computing this, we obtain the following three relations:
\begin{align}
LN-M^2&=
e^{2\sigma}(\sigma_{uu}-\sigma_{vv}), \\[2pt]
L_v-M_u&=\sigma_v (L-N), \label{C1} \qquad
N_u-M_v=\sigma_u (-L+N).
\end{align}
The first one
is the Gauss equation,
and the others are the Codazzi equations.
Similarly, if $f$ is a map into the de Sitter 3-space in $\R^4_1$
or the anti-de Sitter 3-space in $\R^4_2$, then 
we can show that the Codazzi equations are
written as in \eqref{C1}.
We now assume that the mean curvature function
\[
H=e^{-2\sigma}(L-N)/2
\]
is constant.
Then we have
\begin{align}\label{1}
0&=H_u=-2\sigma_u e^{-2\sigma}(L-N)+e^{-2\sigma}(L-N)_u, \\[2pt]
\label{2}
0&=H_v=-2\sigma_v e^{-2\sigma}(L-N)+e^{-2\sigma}(L-N)_v.
\end{align}

\begin{Prop}
A point $p\in U$  is an umbilic of $f$ if and only if
\begin{equation}\label{eq:491}
\hat Q_f:=L+N+2jM
\end{equation}
vanishes at $p$. 
If $f$ has constant mean curvature, 
then $\hat Q_f$ is para-holomorphic
with respect to the paracomplex coordinate system
$z:=u+jv$. Equivalently, $\hat Q_f$ satisfies 
the following para-Cauchy--Riemann equations $($cf.\ Appendix~B$)$:
\begin{equation}\label{eq:501}
(L+N)_u=2M_v,\qquad (L+N)_v=2M_u.
\end{equation}
\end{Prop}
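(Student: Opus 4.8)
The plan is to treat the two assertions of the proposition separately. The umbilic criterion is essentially a reformulation of data already recorded above, while the para-holomorphicity is a short manipulation of the Codazzi equations together with the constancy of $H$.

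For the umbilic criterion, I would apply Fact~\ref{fact:329} with $(a,b,c)=(L,N,M)$. The matrix $A$ appearing there equals $e^{2\sigma}W_f$ by \eqref{eq:W259}, and since $e^{2\sigma}>0$, the point $p$ is an umbilic of $f$ precisely when $A$ is a scalar multiple of the identity. By Fact~\ref{fact:329} this happens if and only if the associated matrix $B$ vanishes at $p$, i.e.\ if and only if $M=0$ and $L+N=0$ there. Since a paracomplex number vanishes exactly when its real and para-imaginary parts both do, this is the condition $\hat Q_f(p)=(L+N)(p)+2jM(p)=0$. (Equivalently, one reads this off directly from \eqref{eq:Af}: $A_f(p)=0$ if and only if $L+N=0$ and $M=0$ at $p$.)

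For the para-holomorphicity, I would recall from Appendix~B that a paracomplex-valued function $g=\alpha+j\beta$ of $z=u+jv$ is para-holomorphic if and only if $\partial_{\bar z}g=0$, and that this is equivalent to the para-Cauchy--Riemann system $\alpha_u=\beta_v$, $\alpha_v=\beta_u$ (the sign pattern differing from the classical case because $j^2=+1$). With $\alpha=L+N$ and $\beta=2M$ this system is precisely \eqref{eq:501}, so the para-holomorphicity of $\hat Q_f$ and the validity of \eqref{eq:501} are one and the same assertion; it thus suffices to prove \eqref{eq:501}. To this end I would rewrite the constancy conditions \eqref{1} and \eqref{2} as $(L-N)_u=2\sigma_u(L-N)$ and $(L-N)_v=2\sigma_v(L-N)$, and combine these with the two Codazzi relations in \eqref{C1} so as to express $L_u,N_u$ in terms of $M_v$, $\sigma_u$ and $L-N$ (and, symmetrically, $L_v,N_v$ in terms of $M_u$, $\sigma_v$ and $L-N$). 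Adding the two resulting identities, the terms carrying $\sigma_u(L-N)$ (resp.\ $\sigma_v(L-N)$) cancel, leaving exactly $(L+N)_u=2M_v$ and $(L+N)_v=2M_u$. Because the Codazzi equations retain the form \eqref{C1} in the de Sitter and anti-de Sitter cases, the same computation covers all $M^3_1(c)$.

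I do not foresee a genuine obstacle here: the first assertion is formal, and the second is a one-line cancellation once \eqref{1}, \eqref{2} and \eqref{C1} are combined. The only points that demand attention are keeping track of signs so that the $\sigma(L-N)$-terms actually cancel rather than reinforce, and being consistent about the $j^2=+1$ convention when identifying $\partial_{\bar z}\hat Q_f=0$ with \eqref{eq:501}.
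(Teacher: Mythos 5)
Your proposal is correct and follows essentially the same route as the paper: the umbilic criterion is obtained from Fact~\ref{fact:329} (equivalently, the vanishing of $A_f$), and the para-Cauchy--Riemann equations \eqref{eq:501} follow by combining the Codazzi equations \eqref{C1} with the constancy conditions \eqref{1} and \eqref{2}. The cancellation of the $\sigma_u(L-N)$ and $\sigma_v(L-N)$ terms works out exactly as you describe, so there is nothing to add.
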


\begin{proof}
A point $p\in U$  is an umbilic of $f$ if and only if
$(L+N)/2=M=0$, which follows from 
Fact~\ref{fact:329}.
By \eqref{C1},
\eqref{1} and \eqref{2}, we obtain \eqref{eq:501}.
\end{proof}

\begin{Rmk}
By \eqref{eq:274} and
\eqref{eq:491}, we have
\begin{equation}\label{eq:27}
N^2(\hat Q_f)=(L+N)^2-4M^2=e^{4\sigma}D_f.
\end{equation}
\end{Rmk}

\begin{Definition}
We call $Q_f:=\hat Q_f\,dz^2$ the {\it Hopf differential} of $f$,
which does not depend on the choice of the paracomplex coordinate system $z$.
\end{Definition}

For the sake of simplicity, we set $o:= (0,0)\in \check \C$
and assume $p:=o$.
By Proposition~\ref{prop:c},
we can write
\begin{align}\label{eq:500}
\hat Q_f(z)&=\epsilon_1 \phi_1(x)+
\epsilon_{-1} \phi_{-1}(y), \\[2pt]
\label{eq:950}
N^2(\hat Q_f(z))&=\phi_1(x)\phi_{-1}(y),
\end{align}
where $x:= (u+v)/2\,\,\, y:= (u-v)/2$.
Since $dz=du+jdv=2\epsilon_1dx+2\epsilon_{-1}dy$,
we have
$dz^2=4\epsilon_1dx^2+4\epsilon_{-1} dy^2$ 
by \eqref{eq:eee} in the appendix.
Hence, we have
\begin{equation}\label{eq:931}
Q_f(z)=
4\epsilon_1 \phi_1(x)\,dx^2
+4\epsilon_{-1} \phi_{-1}(y)\,dy^2.
\end{equation}

\begin{Definition}\label{def:FT}
We say that $Q_f$ is of {\it finite type} at $p$
if,
for each $s\in \{1,-1\}$, either 
\begin{itemize}
\item[(1)] $\phi_s$ vanishes identically as a function germ, or
\item[(2)] 
there exists a non-negative integer $m_s$
such that
$$
c_s:=\frac1{m_s!}\frac{d^{m_s} \phi_s(0)}{dt^{m_s}}
$$
is the coefficient of the first non-vanishing jet of $\phi_s$.
\end{itemize}
In this setting, if (1) holds, we set $m_s:=\infty$
and $c_s:=0$.
Then 
we call 
$$
m_1,m_{-1}\in \{0,1,2,\ldots\}\cup \{\infty\}
$$
the {\it split-orders} of $Q_f$ at $p$.
\end{Definition}

We can construct examples of time-like zero mean curvature
surfaces (i.e.\ ZMC-surfaces)
in $\R^3_1$ using the Weierstrass--type representation formula 
given by Konderak~\cite{KO}:
Let $g(z)$ be a para-holomorphic function
satisfying $g(o)=0$ and $\omega(=\hat \omega(z)\,dz)$ a 
para-holomorphic $1$-form satisfying $N^2(\hat \omega(o))\ne 0$.
We set $\op{Re}(z):=u$ and $\op{Im}(z):=v$.
Then 
\begin{equation}\label{eq:KO}
f(u,v)=\op{Re}\int_o^z (j(1-g^2),\,2g,\,1+g^2)\,\omega
\end{equation}
gives a time-like ZMC-surface. 
This formula
is essentially the same as Konderak's one.
In fact, after changing the coordinates $(t,x,y)$ to $(y,x,-t)/2$
and replacing $(g,\omega)$ by $(jg,j\omega)$,
this formula coincides with the one in~\cite[(3.14)]{KO}.
The first fundamental form of $f$ is
$$
ds^2=(1+N^2[g])^2 N^2[\hat \omega]\,(du^2-dv^2),
$$
and the Hopf differential is
\begin{equation}\label{eq:H666}
Q_f=-\omega\,dg.
\end{equation}
We give here an example 
whose  Hopf differential is not of 
finite type.

\begin{Exa}\label{exA2}
We set 
$
\phi(t):=\exp(-1/t^2)
$
($t\in \R$),
which can be considered as a $C^\infty$ function on $\R$
such that all the derivatives $d^k\phi/dt^k$ ($k=0,1,2,\ldots$)
at $t=0$ vanish.
Then (cf.\ \eqref{X}),
$\hat \omega(z):=\phi\vee \phi(u,v)$
is a para-holomorphic function on $\check \C$.
Substituting the data $(g,\omega):=(z,\hat \omega(z)\,dz)$ into
\eqref{eq:KO}, we obtain a time-like ZMC-surface $f$ whose Hopf
differential $Q_{f}:=-\hat \omega(z)\,dz^2$ (cf.\ \eqref{eq:H666})
is not of finite type.
\end{Exa}

\begin{Prop}
Let $f:U\to \R^3_1$ be
 a time-like CMC surface which is not totally umbilic.
Suppose that $p\in U$ is 
an umbilic
such that
$Q_f(z)$ is of finite type at $p$.
Then there exists an isothermal coordinate system $(u,v)$ centered at $p$
such that
\begin{equation}\label{eq:777}
\hat Q_f(z)=\epsilon_1 x^{m_1}\psi_1(x)+
\epsilon_{-1} y^{m_{-1}}\psi_{-1}(y)
\end{equation}
if $m_1$ and $m_{-1}$ are finite, and
\begin{equation}\label{eq:Q672a}
\hat Q_f(z)=\epsilon_1 x^{m_1}\psi_1(x)
\qquad\Big(\text{resp.\,\,}
\hat Q_f(z)=\epsilon_{-1} y^{m_{-1}}\psi_{-1}(y)\Big)
\end{equation}
if $m_{-1}=\infty$ $($resp.\ $m_{1}=\infty)$,
where $\psi_{1}$ and $\psi_{-1}$ are smooth functions of one variable
satisfying $\psi_1(0)\ne 0$ and $\psi_{-1}(0)\ne 0$.

Moreover, if $m_1$ and $m_{-1}$ are finite, the split-orders
$m_1,m_{-1}$ and the sign of
$\psi_{1}(0)\psi_{-1}(0)$ do not depend on the choice of
the isothermal coordinate system.
\end{Prop}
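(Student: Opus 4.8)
The plan is to treat the two assertions of the proposition separately: first produce the normal form \eqref{eq:777}/\eqref{eq:Q672a} by a direct factorization in an arbitrary isothermal chart centered at $p$, and then establish coordinate-independence by tracking how the pair $(\phi_1,\phi_{-1})$ transforms under a change of isothermal coordinates. After a translation we may take $p=o$. Since $f$ is CMC, the Hopf differential splits as in \eqref{eq:500}, namely $\hat Q_f(z)=\epsilon_1\phi_1(x)+\epsilon_{-1}\phi_{-1}(y)$ with $x=(u+v)/2$, $y=(u-v)/2$. Because $p$ is an umbilic we have $\hat Q_f(o)=0$, and since $\check\C=\epsilon_1\R\oplus\epsilon_{-1}\R$ is a direct decomposition this forces $\phi_1(0)=\phi_{-1}(0)=0$; hence whenever $\phi_s$ is not the zero germ, its split-order satisfies $m_s\ge 1$. (We may assume at least one $\phi_s$ is not the zero germ, as otherwise $\hat Q_f$ vanishes identically near $p$ and no normal form arises.) For each $s$ with $\phi_s\not\equiv 0$, the finite-type hypothesis says the $(m_s-1)$-jet of $\phi_s$ at $0$ vanishes while the $m_s$-jet does not, so Hadamard's lemma (Taylor's formula with integral remainder, applied $m_s$ times) yields $\phi_s(t)=t^{m_s}\psi_s(t)$ with $\psi_s$ smooth and $\psi_s(0)=c_s\ne 0$. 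Substituting into \eqref{eq:500} gives \eqref{eq:777} when both $m_1,m_{-1}$ are finite, and \eqref{eq:Q672a} when one split-order is $\infty$ (the corresponding term being the zero germ). No further change of coordinates is needed for this part.

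For the moreover-part, let $(u,v)$ and $(\tilde u,\tilde v)$ be two isothermal coordinate systems centered at $p$, with paracomplex coordinates $z,\tilde z$ and induced decompositions $\hat Q_f=\epsilon_1\phi_1(x)+\epsilon_{-1}\phi_{-1}(y)=\epsilon_1\tilde\phi_1(\tilde x)+\epsilon_{-1}\tilde\phi_{-1}(\tilde y)$ in normal form $\phi_s(t)=t^{m_s}\psi_s(t)$, $\tilde\phi_s(\tilde t)=\tilde t^{\tilde m_s}\tilde\psi_s(\tilde t)$. The key structural fact is that the transition $z\mapsto\tilde z$ is para-holomorphic: the identity $ds^2=e^{2\sigma}N^2[dz]=e^{2\tilde\sigma}N^2[d\tilde z]$ forces $N^2[d\tilde z/dz]>0$, and fixing the orientation and time-orientation — both inherited from the ambient space form $M^3_1(c)$ — rules out the para-anti-holomorphic alternative, which would interchange the two idempotent components. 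In idempotent coordinates a para-holomorphic transition fixing $p$ decouples as $\tilde x=X(x)$, $\tilde y=Y(y)$ for diffeomorphism-germs $X,Y$ of $(\R,0)$ with $X'(0)>0$ and $Y'(0)>0$. This description of the transition maps, which rests on the paracomplex bookkeeping of Appendix~B and the Lorentzian analogue of "transitions between isothermal charts are (anti)holomorphic", is the step I expect to require the most care.

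Granting this, the proof concludes by a routine chain-rule computation. Since $Q_f=4\epsilon_1\phi_1(x)\,dx^2+4\epsilon_{-1}\phi_{-1}(y)\,dy^2$ as in \eqref{eq:931} is an intrinsically defined quadratic differential, independent of the paracomplex coordinate, comparing the two expressions gives $\tilde\phi_1(\tilde x)=\phi_1(x)(dx/d\tilde x)^2$ and $\tilde\phi_{-1}(\tilde y)=\phi_{-1}(y)(dy/d\tilde y)^2$. A smooth one-variable function precomposed with an origin-fixing diffeomorphism-germ and multiplied by the square of its derivative vanishes to the same order at $0$; concretely $\phi_s\equiv 0\iff\tilde\phi_s\equiv 0$, and when $\phi_s(t)=t^{m_s}\psi_s(t)$ one gets $\tilde\phi_s(\tilde t)=\tilde t^{\,m_s}\tilde\psi_s(\tilde t)$ with $\tilde\psi_s(0)=\psi_s(0)/X'(0)^{m_s+2}$ (respectively with $Y'(0)$ for $s=-1$), which is nonzero of the same sign as $\psi_s(0)$. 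Hence $\tilde m_s=m_s$ for $s\in\{1,-1\}$, and the sign of $\tilde\psi_1(0)\tilde\psi_{-1}(0)$ equals that of $\psi_1(0)\psi_{-1}(0)$, which completes the proof.

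In summary: the existence of the normal form is immediate from \eqref{eq:500} together with Hadamard's lemma; the coordinate-independence reduces, via the intrinsic nature of $Q_f$, to understanding the transition between two isothermal coordinate systems, and it is precisely the verification that such a transition is para-holomorphic with positive idempotent components that constitutes the main obstacle and simultaneously delivers both the equality of split-orders and the invariance of the sign of $\psi_1(0)\psi_{-1}(0)$.
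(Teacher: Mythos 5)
Your proposal is correct and follows essentially the same route as the paper: the normal form comes from the splitting \eqref{eq:500} together with the Hadamard/Taylor factorization $\phi_s(t)=t^{m_s}\psi_s(t)$, and the invariance comes from writing $\hat Q_1=\hat Q_f\,(dz/dw)^2$ with the para-holomorphic transition decoupling into $\tilde x=X(x)$, $\tilde y=Y(y)$. Your extra step of invoking time-orientation to pin down $X'(0),Y'(0)>0$ (the paper only records that these derivatives are nonzero) is a slightly more careful treatment of the sign of $\psi_1(0)\psi_{-1}(0)$, but it does not change the argument's structure.
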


\begin{proof}
Here we consider the case where
$m_{1},\,\,m_{-1}<\infty$. 
We can write
\begin{equation}\label{eq:618}
\phi_{1}(x)=x^{m_1}\psi_{1}(x),\quad \phi_{-1}(y)=y^{m_{-1}}\psi_{-1}(y)
\qquad (\psi_{1}(0)\psi_{-1}(0)\ne 0).
\end{equation}
From \eqref{eq:500}, we obtain the expression \eqref{eq:777}.

Let $(a,b)$ be another orientation-compatible
isothermal coordinate system of $f$
centered at $p$.
Then $w:=a(u,v)+jb(u,v)$ is a para-holomorphic function,
and we define $\hat Q_1$ so that
$\hat Q_1dw^2=\hat Q_fdz^2$ holds.
Thus, we have
$\hat Q_1 =\hat Q_f(dz/dw)^2$
and
$$
\frac{dz}{dw}=\epsilon_1 \theta_1(s) +\epsilon_{-1} \theta_{-1}(t)
\qquad (s:=\tfrac{a+b}{2},\quad t:=\tfrac{a-b}{2}),
$$
where $\theta_1(0)\theta_{-1}(0)\ne 0$.
By this expression
with
\eqref{eq:eee},
we have
$$
\hat Q_1=\epsilon_1 x^{m_1}\psi_{1}(x)\theta_1(s)^2 +
\epsilon_{-1}y^{m_{-1}}\psi_{-1}(y)\theta_{-1}(t)^2.
$$
In this expression we can write
$x=x(s)$ and $y=y(t)$ 
so that $x(0)=y(0)=0$,
$dx(0)/ds\ne 0$ and $dy(0)/dt\ne 0$.
Hence,  $m_1$, $m_{-1}$ and the sign of $\psi_1(0)\psi_{-1}(0)$
are invariant under isothermal coordinate changes.
\end{proof}

\begin{Definition}\label{def:QS}
In the expression \eqref{eq:777} at $p$,
if  $(m:=)m_1=m_{-1}$ holds,
we say that $Q_f$ is
{\it non-degenerate} at $p$, otherwise we say that $Q_f$ is {\it degenerate} at $p$.
If $Q_f$ is non-degenerate at $p$,
then $m$ is called the {\it order} of $Q_f$ at $p$.
\end{Definition}

When $Q_f$ is
non-degenerate at $p$, we can prove the following:

\begin{Cor}\label{prop:488}
If $Q_f$ is
non-degenerate at $p$,
then
$$
m=\op{Min}\left\{i\ge 0\,;\, \frac{d^i\hat Q_f(p)}{dz^i}\ne 0\right\}
$$
holds. 	
Moreover, 
we can write
\begin{equation}\label{eq:614}
\hat Q_f(z)= R(z)z^m,
\qquad 
N^2(R(0))=2^{-2m}\psi_1(0)\psi_{-1}(0)\ne 0,
\end{equation}
where $R(z):=2^{-m}(\epsilon_1\psi_1(x)+\epsilon_{-1} \psi_{-1}(y))$.
\end{Cor}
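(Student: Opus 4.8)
The plan is to read off both assertions from the normal form \eqref{eq:777} via the idempotent description of $\check\C$. Recall from Appendix~B that $\epsilon_1,\epsilon_{-1}$ satisfy $\epsilon_1^2=\epsilon_1$, $\epsilon_{-1}^2=\epsilon_{-1}$, $\epsilon_1\epsilon_{-1}=0$, $\epsilon_1+\epsilon_{-1}=1$, and that, in the coordinates centered at $p$, $z=2\epsilon_1 x+2\epsilon_{-1}y$ with $x=(u+v)/2$, $y=(u-v)/2$. Expanding $z^m=(2\epsilon_1 x+2\epsilon_{-1}y)^m$ and using the idempotent relations (all mixed terms vanish) gives $z^m=2^m(\epsilon_1 x^m+\epsilon_{-1}y^m)$. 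Since $Q_f$ is non-degenerate, $m_1=m_{-1}=:m$, so by \eqref{eq:777} (equivalently \eqref{eq:618}) we have $\hat Q_f(z)=\epsilon_1 x^m\psi_1(x)+\epsilon_{-1}y^m\psi_{-1}(y)$. With $R(z):=2^{-m}\bigl(\epsilon_1\psi_1(x)+\epsilon_{-1}\psi_{-1}(y)\bigr)$, multiplying out and again discarding mixed terms yields $R(z)\,z^m=\epsilon_1 x^m\psi_1(x)+\epsilon_{-1}y^m\psi_{-1}(y)=\hat Q_f(z)$. For the norm I would use $N^2(cw)=c^2N^2(w)$ for $c\in\R$ together with $N^2(\epsilon_1 a+\epsilon_{-1}b)=ab$ (valid since $\epsilon_1 a+\epsilon_{-1}b=\tfrac{a+b}{2}+j\tfrac{a-b}{2}$), which give $N^2(R(0))=2^{-2m}\psi_1(0)\psi_{-1}(0)\ne 0$ by \eqref{eq:777}; in particular $R(0)$ is invertible in $\check\C$. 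This proves \eqref{eq:614}.

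For the identity $m=\op{Min}\{i\ge 0 : d^i\hat Q_f(p)/dz^i\ne 0\}$, I would note that para-holomorphic differentiation acts componentwise in the idempotent splitting, so from $\hat Q_f(z)=\epsilon_1\phi_1(x)+\epsilon_{-1}\phi_{-1}(y)$ and $z=2\epsilon_1 x+2\epsilon_{-1}y$ one gets $d^i\hat Q_f/dz^i=2^{-i}\bigl(\epsilon_1\phi_1^{(i)}(x)+\epsilon_{-1}\phi_{-1}^{(i)}(y)\bigr)$. Evaluating at $p$ (where $x=y=0$), this vanishes if and only if $\phi_1^{(i)}(0)=\phi_{-1}^{(i)}(0)=0$; since $\phi_s(t)=t^m\psi_s(t)$ with $\psi_s(0)\ne 0$ for $s\in\{1,-1\}$, the smallest such $i$ is exactly $m$. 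Equivalently, one may apply the para-holomorphic Leibniz rule to $\hat Q_f=R\,z^m$, using $d^i(z^m)/dz^i=\frac{m!}{(m-i)!}z^{m-i}$, which vanishes at $p$ for $i<m$ and equals $m!$ for $i=m$, together with the invertibility of $R(p)$.

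I do not expect a genuine obstacle here: the content is bookkeeping with the idempotents $\epsilon_1,\epsilon_{-1}$, the factors of $2$ arising from $z=2\epsilon_1 x+2\epsilon_{-1}y$, and the behaviour of $N^2$ under the idempotent splitting. The only point deserving a word of care is that $\hat Q_f=R\,z^m$ is a genuine smooth identity even though $z$ is a zero-divisor along the null cone $xy=0$: this is handled not by forming the quotient $\hat Q_f/z^m$ but by exhibiting $R$ directly as the manifestly smooth (indeed para-holomorphic) function above.
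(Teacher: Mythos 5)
Your proof is correct, and it fills in exactly the computation the paper leaves implicit (the paper states this Corollary without proof, as an immediate consequence of the normal form \eqref{eq:777} and the idempotent identities $z=2\epsilon_1x+2\epsilon_{-1}y$, $\epsilon_s^2=\epsilon_s$, $\epsilon_1\epsilon_{-1}=0$, $N^2(\epsilon_1a+\epsilon_{-1}b)=ab$). Both the factorization $\hat Q_f=R(z)z^m$ and the componentwise formula $d^i\hat Q_f/dz^i=2^{-i}(\epsilon_1\phi_1^{(i)}(x)+\epsilon_{-1}\phi_{-1}^{(i)}(y))$ check out, so there is nothing to correct.
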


\begin{Rmk}\label{rem:524}
When $Q_f$ vanishes identically, $f$ is totally umbilic, and such a case 
will not be treated in this paper, since it is an exceptional one. 
For example, a totally umbilic time-like surface in $\R^3_1$ 
is congruent to a plane or a part of the de Sitter plane of constant curvature 
$c(>0)$,
which can be proved by imitating the Euclidean case.
\end{Rmk}

\section{Curvature line flows around umbilics}
Throughout this section, we fix a time-like CMC surface
$
f:U\to M^3_1
$
such that the first fundamental 
form $ds^2$ satisfies \eqref{283},
and the Weingarten matrix of $f$ is given by \eqref{eq:W259}.
We fix a point $p\in U$ arbitrarily and assume that
$Q_f(z)$ is of finite type at $p$. 
Without loss of generality, we may assume that $p:=o$.
By \eqref{eq:27}, \eqref{eq:950} and \eqref{eq:618},
we can write
\begin{align}\label{eq:3-2}
&e^{4\sigma}D_f=
N^2(\hat Q_f(z))=x^{m_1}y^{m_{-1}}\psi_1(x)\psi_{-1}(y)
\end{align}
with the point $p$ as the base point,
if $m_1$ and $m_{-1}$ are not equal to $\infty$.
Set
\begin{align*}
&\mc L_1:=\{u+jv \in \check \C\,;\, v=u\}, \qquad
\mc L_{-1}:=\{u+jv \in \check \C\,;\, v=-u\}.
\end{align*}
By definition, we have the following equivalence:
\begin{equation}\label{eq:z617}
z(=(u+jv))\in \mc L_s \,\, \Longleftrightarrow\,\, u-sv=0.
\end{equation}
Then $\mc L_1\cup \mc L_{-1}$ can be considered as the set of
non-invertible elements of $\check \C$.

\begin{Lemma}\label{lem:614}
The following two conditions are equivalent:
\begin{enumerate}
\item $p$ is a quasi-umbilic,
\item 
there exists $s\in \{1,-1\}$ such that
$1\le m_s<\infty$ and $m_{-s}=0$.
\end{enumerate}
If one of these occurs, then
$U\cap \mc L_{-s}$ is the
set of quasi-umbilics of $f$ along which 
the $($unique$)$ principal direction of $f$ 
is parallel to the line $\mc L_{s}$.
\end{Lemma}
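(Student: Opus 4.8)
\emph{Proof strategy.} The plan is to translate every assertion into a statement about the para-holomorphic function $\hat Q_f$ in the normal form \eqref{eq:777} and then read it off from the factorisation \eqref{eq:950}. Three ingredients do the work: (a) $p$ is an umbilic of $f$ if and only if $\hat Q_f(p)=0$, which by \eqref{eq:500} — and since $\epsilon_1,\epsilon_{-1}$ are an $\R$-basis of $\check\C$ — amounts to $\phi_1(0)=\phi_{-1}(0)=0$, i.e.\ $m_1\ge1$ and $m_{-1}\ge1$; (b) since $D_f$ is the discriminant \eqref{eq:274}, the two principal curvatures of $f$ coincide at $p$ exactly when $D_f(p)=0$; (c) by \eqref{eq:27} and \eqref{eq:3-2}, $e^{4\sigma}D_f=N^2(\hat Q_f)=x^{m_1}y^{m_{-1}}\psi_1(x)\psi_{-1}(y)$ near $p$, with $\psi_1(0)\psi_{-1}(0)\ne0$ by \eqref{eq:618}. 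I would carry this out in the generic configuration $m_1,m_{-1}<\infty$; the configurations with an infinite split-order are degenerate (both equal to $\infty$ forces $Q_f\equiv0$, which is excluded) and I would treat them by a separate, routine inspection.

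For the equivalence (1)$\Leftrightarrow$(2): by (b) and (c), $D_f(p)=0$ holds iff $\phi_1(0)\phi_{-1}(0)=0$, i.e.\ $m_1\ge1$ or $m_{-1}\ge1$, while by (a) the point $p$ fails to be an umbilic exactly when $m_1=0$ or $m_{-1}=0$. Hence $p$ is a quasi-umbilic precisely when exactly one of $\phi_1(0),\phi_{-1}(0)$ vanishes, i.e.\ when there is $s\in\{1,-1\}$ with $m_{-s}=0$ and $1\le m_s<\infty$; this is condition (2), and reading the chain backwards yields (2)$\Rightarrow$(1).

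For the last assertion, fix $s$ as in (2). Since $m_{-s}=0$ the factor $\phi_{-s}$ is nonzero near $p$, so by (a) no umbilic of $f$ occurs in a neighbourhood of $p$; and since also $\psi_s(0)\ne0$ and $1\le m_s<\infty$, the zero set of $e^{4\sigma}D_f=N^2(\hat Q_f)$ near $p$ reduces by \eqref{eq:3-2} to the vanishing locus of the variable attached to $\epsilon_s$, which by \eqref{eq:z617} is exactly $\mc L_{-s}$; hence, near $p$, the set of quasi-umbilics of $f$ is $U\cap\mc L_{-s}$. Finally, restricting \eqref{eq:500} to $\mc L_{-s}$ kills the $\epsilon_s$-component, so there $\hat Q_f=\epsilon_{-s}\phi_{-s}=\tfrac{1-sj}{2}\phi_{-s}$; comparing with $\hat Q_f=L+N+2jM$ from \eqref{eq:491} gives $L+N=\phi_{-s}/2$ and $M=-s\phi_{-s}/4$ along $\mc L_{-s}$, so the matrix $B=\pmt{M & (L+N)/2 \\ (L+N)/2 & M}$ associated by Fact~\ref{fact:329} to $A=\pmt{L & M \\ -M & -N}$ equals $\tfrac{\phi_{-s}}{4}\pmt{-s & 1 \\ 1 & -s}$ there, a nonzero matrix of vanishing determinant whose null vectors are spanned by ${}^t(1,s)$. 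By Fact~\ref{fact:329}, ${}^t(1,s)$ is then an eigenvector of $A$, hence the (unique, since $D_f=0$ but $\hat Q_f\ne0$ along $\mc L_{-s}$) principal direction of $f$ there; and ${}^t(1,s)$ is a direction vector of $\mc L_s$, which is the claim. I expect the only delicate point to be this last step — keeping the sign $s$ straight so that the principal direction comes out parallel to $\mc L_s$ and not $\mc L_{-s}$, and checking that near $p$ the locus $\{D_f=0\}$ has no component beyond $\mc L_{-s}$, which is precisely where the non-vanishing of $\psi_s(0)$ and $\phi_{-s}(0)$ enters.
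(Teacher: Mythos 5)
Your proposal is correct and follows essentially the same route as the paper: the equivalence is read off from $\hat Q_f(p)=0$ versus $N^2(\hat Q_f)=\phi_1\phi_{-1}=0$, and the principal direction along $\mc L_{-s}$ is obtained from the relation $(L+N)+2sM=0$ there, your null-vector computation with the matrix $B$ of Fact~\ref{fact:329} being equivalent to the paper's direct determination of the eigenspace of $A_f$. Your sign bookkeeping checks out ($(1,s)$ and $(s,1)$ span the same line in $\mc L_s$), and your write-up is in fact more explicit than the paper's, which dismisses the equivalence as obvious.
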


\begin{proof}
It is obvious that
$p$ is a quasi-umbilic if and only if
there exists $s\in \{1,-1\}$ such that
$1\le m_s<\infty$ and $m_{-s}=0$.
In this setting, 
as in the statement of (2) in Lemma \ref{lem:614}, 
$U\cap \mc L_{-s}$ coincides with 
the
set of quasi-umbilics of $f$.
Now fix  $q\in \mc L_{-s}$ arbitrarily.
Then $a+sb=0$, where $a:=L(q)+N(q)$ and $b:=2M(q)$.
Hence the matrix $A_f(q)$ (given in \eqref{eq:Af})
has a unique $1$-dimensional eigenspace  
generated by  $(s,1)\in \mc L_{s}$, 
proving the conclusion.
\end{proof}

\begin{Definition}\label{def:260}
A point $p$ is called a {\it positive point}
$($resp.\ {\it negative point}$)$
if  $D_f(p)>0$ (resp.\ $D_f(p)<0$). 
Then  the principal curvatures of $f$
at 
a  positive $($resp.\ negative$)$ point
are $($resp.\ are not$)$ real numbers. 
\end{Definition}

Let $V(\subset U)$ be a connected neighborhood  of $o$.
We 
denote by
$\mc P(V)$ (resp.\ $\mc N(V)$) the
set of positive (resp.\ negative)
points on $V$
in the sense of Definition~\ref{def:260},
and denote by $\Xi(V)$ (resp.\ $\Xi'(V)$)
the set of umbilics (resp.\ quasi-umbilics). 
We set 
$$
\mc Z(V):=\Xi(V)\cup \Xi'(V).
$$
By definition, $\mc Z(V)$ coincides with
the zero set of $D_f$,
so we have the following disjoint union:
$$
V=\mc P(V)\cup \mc N(V)\cup \mc Z(V).
$$

\subsection*{The case $m_1=\infty$ or $m_{-1}=\infty$}

In this subsection, we consider the case where
$m_1=\infty$ or $m_{-1}=\infty$:

\begin{Prop}
Let $f:U\to M^3_1(c)$ be a time-like CMC immersion.
Suppose that 
$(0\le) m_s\le m_{-s}=\infty$.
Then
$\mc Z(V)=V$ holds.
Moreover, if $f$ is a real analytic map,
then one of the following three cases occurs:
\begin{enumerate}
\item 
$m_1=m_{-1}=\infty$, that is, $f$ is a totally umbilic
surface;
\item $m_s=0$ and $m_{-s}=\infty$,
that is, $f$ is a
totally quasi-umbilic
surface, i.e., $\Xi'(V)=V$;
\item $1\le m_s<\infty$ and $m_{-s}=\infty$.
In this case, we have 
$$
\Xi(V)=\mc L_{-s}\cap V,\quad
\Xi'(V)=V\setminus \mc L_{-s}.
$$
\end{enumerate}
\end{Prop}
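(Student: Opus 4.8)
The plan is to read everything off the para-holomorphic splitting \eqref{eq:500} together with the product formula \eqref{eq:950}, i.e.\ $\hat Q_f(z)=\epsilon_1\phi_1(x)+\epsilon_{-1}\phi_{-1}(y)$ and $N^2(\hat Q_f(z))=\phi_1(x)\phi_{-1}(y)$, both valid on a neighborhood $V$ of $o$ that we are free to shrink. Since $m_{-s}=\infty$, the germ $\phi_{-s}$ vanishes identically, so after shrinking $V$ we may assume $\phi_{-s}\equiv 0$ on $V$; hence $N^2(\hat Q_f)\equiv 0$ on $V$, and since $D_f=e^{-4\sigma}N^2(\hat Q_f)$ by \eqref{eq:27} we obtain $D_f\equiv 0$ on $V$. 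Because $\mc Z(V)$ is exactly the zero set of $D_f$, this already gives $\mc Z(V)=V$, and no analyticity is needed for this first assertion.

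For the ``moreover'' part, I would first observe that if $f$ is real analytic the isothermal coordinate system may be chosen real analytic, so $\hat Q_f$, and with it $\phi_1$ and $\phi_{-1}$, are real analytic; in particular $\phi_{-s}\equiv 0$ on all of the connected $V$. I would then split on the value of $m_s$, which exhausts the hypothesis $0\le m_s\le\infty$. If $m_s=\infty$, then $\phi_s\equiv 0$ too, so $\hat Q_f\equiv 0$ on $V$; since a point is an umbilic exactly when $\hat Q_f$ vanishes there, every point of $V$ is an umbilic, and by analyticity $f$ is totally umbilic on $U$, which is case (1). If $m_s=0$, then $\phi_s(0)\ne 0$, so after shrinking $V$ we may assume $\phi_s$ is nowhere zero on $V$; then $\hat Q_f=\epsilon_s\phi_s$ is everywhere a nonzero non-invertible element of $\check\C$, that is, $\hat Q_f\ne 0$ and $N^2(\hat Q_f)=0$ throughout $V$, so $\Xi(V)=\emptyset$ and every point is a quasi-umbilic, giving $\Xi'(V)=V$, which is case (2).

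It remains to treat $1\le m_s<\infty$. Writing $\phi_s$ in the form \eqref{eq:618} with $\psi_s(0)\ne 0$ and shrinking $V$ so that $\psi_s$ is nowhere zero on the range of the variable it depends on ($x=(u+v)/2$ if $s=1$, $y=(u-v)/2$ if $s=-1$), one sees from \eqref{eq:z617} that the zero set of $\phi_s$ on $V$ is precisely $\mc L_{-s}\cap V$. Since $\phi_{-s}\equiv 0$ on $V$, we have $\hat Q_f=\epsilon_s\phi_s$ there, so $\hat Q_f$ vanishes exactly on $\mc L_{-s}\cap V$ and is a nonzero non-invertible element off $\mc L_{-s}$; hence $\Xi(V)=\mc L_{-s}\cap V$ and $\Xi'(V)=V\setminus\mc L_{-s}$, which is case (3).

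I expect the only delicate points to be routine bookkeeping ones: the finite-type conditions are a priori only germ conditions at $p$, so the passage to an open $V$ requires both choosing the isothermal coordinates real analytic (to make $\phi_1,\phi_{-1}$ real analytic and to propagate $\phi_{-s}\equiv 0$ along $V$) and shrinking $V$ enough that the relevant non-vanishing statements ($\phi_s\ne 0$ when $m_s=0$, $\psi_s\ne 0$ when $m_s\ge 1$) hold throughout $V$. Once this is arranged, the classification is immediate from the idempotent decomposition $\epsilon_1,\epsilon_{-1}$ and the dichotomy ``umbilic $\Leftrightarrow \hat Q_f=0$'' versus ``quasi-umbilic $\Leftrightarrow \hat Q_f\ne 0$ and $N^2(\hat Q_f)=0$''.
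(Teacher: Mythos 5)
Your proposal is correct and follows essentially the same route as the paper's proof: split $\hat Q_f=\epsilon_1\phi_1(x)+\epsilon_{-1}\phi_{-1}(y)$, use $m_{-s}=\infty\Rightarrow\phi_{-s}\equiv 0$ to get $N^2(\hat Q_f)\equiv 0$ (hence $\mc Z(V)=V$), and then classify by whether $m_s$ is $\infty$, $0$, or in $[1,\infty)$ via the dichotomy ``umbilic iff $\hat Q_f=0$, quasi-umbilic iff $\hat Q_f\ne 0$ and $N^2(\hat Q_f)=0$.'' Your added bookkeeping (the explicit derivation of $\mc Z(V)=V$ without analyticity, and the role of analyticity in propagating the vanishing) is consistent with, and slightly more explicit than, the paper's argument.
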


\begin{proof}
If $m_1=m_{-1}=\infty$, then $Q_f$ vanishes identically, 
since $\hat Q_f$ is
a real analytic function; thus (1) follows.
We then consider the case 
where $1\le m_s<\infty$ and $m_{-s}=\infty$.
Then we have (cf.\ \eqref{eq:Q672a})
\begin{align*}
&\hat Q_f(z)=\epsilon_s (u+sv)^{m_s}\psi_s(u+sv),
\qquad
N^2(\hat Q_f(z))=0.
\end{align*}
So $\hat Q_f(z)$ vanishes if and only if $u+s v=0$,
which implies $z\in \mc L_{-s}$.
Hence $\Xi(V)=\mc L_{-s}\cap V$ and
$\Xi'(V)=V\setminus \mc L_{-s}$ hold.
Similarly, when $m_s=0$ and $m_{-s}=\infty$,
we obtain
$\Xi(V)=\varnothing$ and $\Xi'(V)=V$.
\end{proof}

\begin{Exa}\label{exA1}
We set $(g,\omega):=(-1,dz)$. Then $Q_f$ is identically zero,
and the corresponding surface is a plane. In this case,
$m_1=m_{-1}=\infty$.
On the other hand, if we set
$g:=-2\epsilon_1z$ and $\omega:=dz$,
then 
$$
Q_f=2\epsilon_1dz^2=8\epsilon_1dx^2,
$$
and so $m_1=0$ and $m_{-1}=\infty$.
The resulting ZMC-surface is given by
$$
f(x,y):=
\left(x-\frac{16x^3}{3},-4x^2,
x+\frac{16x^3}3\right)+y(-1,0,1),
$$
which is a totally quasi-umbilic ruled surface defined on  $\R^2$.  
As shown by Clelland~\cite{C}, any totally quasi-umbilic 
surface is a ruled surface.
\end{Exa}

\subsection*{Properties of quasi-umbilics}

From now on, we assume that 
$m_1$ and $m_{-1}$ are
finite.
In this subsection, we assume that $o(:=p)$ is a quasi-umbilic.

\begin{Thm}\label{thm:909}
Let $f:U\to M^3_1(c)$ be a time-like CMC immersion.
Suppose that 
$m_1,m_{-1}<\infty$.
If $o$ is a quasi-umbilic, then 
there exists a neighborhood $V(\subset U)$ of $o$
such that $\Xi'(V)=\mc L_s\cap V$
for some $s\in \{1,-1\}$.
Moreover,
\begin{enumerate}
\item if $m_{-s}$ is odd, then $\mc P(V)$ and $\mc N(V)$
are non-empty;
\item if $m_{-s}$ is even, then $\mc N(V)=V\setminus \mc L_s$
or $\mc P(V)=V\setminus \mc L_s$.
\end{enumerate}
\end{Thm}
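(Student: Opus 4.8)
The plan is to exploit the factorization \eqref{eq:3-2}, namely $e^{4\sigma}D_f = x^{m_1}y^{m_{-1}}\psi_1(x)\psi_{-1}(y)$, together with Lemma~\ref{lem:614}. First I would note that since $o$ is a quasi-umbilic, Lemma~\ref{lem:614} gives a choice of $s\in\{1,-1\}$ with $1\le m_s<\infty$ and $m_{-s}=0$; then $\psi_{-s}(0)\ne 0$ and $\phi_{-s}=\psi_{-s}$ is nowhere zero on a small enough interval, while $\phi_s(x)=x^{m_s}\psi_s(x)$ vanishes precisely at $x=0$ (shrinking so that $\psi_s$ is nonvanishing). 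Choosing $V$ to be a product neighborhood $\{|x|<\delta,\ |y|<\delta\}$ on which both $\psi_1$ and $\psi_{-1}$ are nowhere zero, the zero set of $D_f$ on $V$ is exactly $\{x=0\}$, i.e.\ $\mc L_{s}\cap V$ in the notation of \eqref{eq:z617} (here I must be careful: $x=(u+v)/2$, and $x=0$ means $u+v=0$, hence $u-(-1)v=0$, so the line is $\mc L_{-1}$; I would re-index so that the conclusion reads $\Xi'(V)=\mc L_{s'}\cap V$ with the correct $s'$, matching the statement's $s$). This already establishes the first assertion that $\mc Z(V)=\Xi'(V)=\mc L_{s}\cap V$, and in particular that there are no umbilics in $V$ other than possibly along this line — but along $\mc L_{s}$ we have $\hat Q_f\ne 0$ since only one of the two summands in \eqref{eq:777} vanishes there, so $\Xi(V)=\varnothing$ and every zero of $D_f$ is a genuine quasi-umbilic, consistent with Lemma~\ref{lem:614}.

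Next, for the sign analysis, on $V\setminus\mc L_{s}$ we have $e^{4\sigma}D_f = x^{m_{-s}}\,\psi_s(x)\psi_{-s}(y)$ (writing $m_{-s}$ for the nonzero split-order; I would keep careful track of which index the statement's $m_{-s}$ refers to). Since $e^{4\sigma}>0$, the sign of $D_f$ is the sign of $x^{m_{-s}}\psi_s(x)\psi_{-s}(y)$. Shrinking $V$ so that $\psi_s$ and $\psi_{-s}$ each have constant sign on $V$, the product $\psi_s(x)\psi_{-s}(y)$ has constant sign $\eta\in\{+1,-1\}$ throughout $V$. Hence $\op{sign}D_f(x,y)=\eta\cdot\op{sign}(x^{m_{-s}})$ on $V\setminus\mc L_{s}$. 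If $m_{-s}$ is odd, $x^{m_{-s}}$ is positive for $x>0$ and negative for $x<0$, so both $\mc P(V)$ and $\mc N(V)$ are nonempty (each is an open half-plane minus the line), giving (1). If $m_{-s}$ is even, $x^{m_{-s}}>0$ for all $x\ne 0$, so $D_f$ has constant sign $\eta$ on $V\setminus\mc L_{s}$; thus $V\setminus\mc L_{s}$ equals $\mc P(V)$ if $\eta=+1$ and equals $\mc N(V)$ if $\eta=-1$, which is exactly (2).

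The argument is almost entirely bookkeeping once \eqref{eq:3-2} and Lemma~\ref{lem:614} are in hand, so there is no deep obstacle; the one point demanding genuine care is the translation between the split-order subscripts $\{1,-1\}$ attached to $\phi_{\pm1}$ (indexed by the characteristic coordinates $x,y$) and the subscripts attached to the null lines $\mc L_{\pm1}$ in the $(u,v)$-plane, since $x=0$ corresponds to the line $v=-u$ and $y=0$ to $v=u$. I would state the correspondence explicitly at the outset — a vanishing of $\phi_s$ forces the surviving principal direction to lie along $\mc L_s$ (as recorded in the last sentence of Lemma~\ref{lem:614}), so that the quasi-umbilic locus is $\mc L_{-s'}$ for the appropriate $s'$ — and then the indices in (1) and (2) fall out consistently. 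A secondary point worth a line: one should remark that if $m_{-s}$ happens to be odd \emph{and} equals $1$, the factorization is still valid and $\psi_{-s}(0)\ne 0$, so no degenerate sub-case arises.
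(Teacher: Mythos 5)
Your proposal is correct and follows essentially the same route as the paper: invoke Lemma~\ref{lem:614} to identify the quasi-umbilic locus with a null line, then read off the sign of $D_f$ from the factorization $e^{4\sigma}D_f=N^2(\hat Q_f)$ in \eqref{eq:3-2} and the parity of the nonzero split-order. Your version merely spells out more explicitly the index bookkeeping between $\phi_{\pm1}$ and $\mc L_{\pm1}$ and the fact that $\Xi(V)=\varnothing$, both of which the paper leaves implicit.
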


\begin{proof}
Since $o$ is a quasi-umbilic, 
there exists $s\in \{1,-1\}$ satisfying
$m_s=0$ and $m_{-s}>0$ (cf.\ Lemma~\ref{lem:614}).
For the sake of simplicity, we only consider the
case where $s=1$, that is, $m_1=0$.
In this case, $0<m_{-1}< \infty$.
Then 
$$
N^2(\hat Q_f(z))=y^{m_{-1}}\psi_1(x)\psi_{-1}(y) \qquad
(y=\tfrac{u-v}{2}).
$$
If $m_{-1}$ is odd (resp.\ even),
then $N^2(\hat Q_f(z))$ changes (resp.\ does not change)
sign around $\mc L_{1}$,
since $\psi_1(0)\psi_{-1}(0)\ne 0$.
\end{proof}

In \eqref{eq:KO}, if we set $\omega=\hat \omega(z)dz$, 
then we can write
$$
g(z)=\epsilon_1 g_1(x)+\epsilon_{-1} g_2(y),\qquad
\hat \omega(z)=\epsilon_1 \hat \omega_1(x)
+\epsilon_{-1} \hat \omega_2(y).
$$
Since $dz=2(\epsilon_1 dx+\epsilon_{-1} dy)$, 
\eqref{eq:KO}
can be reduced to the following formula:
\begin{align}\label{eq:413}
f&=\sum_{i=1}^2 \int_{0}^{x_i} \Big( 
(-1)^{i+1}(1-g_i(x_i)^2),\, 2g_i(x_i),\,
1+g_i(x_i)^2
\Big)
\hat \omega_i(x_i)\,dx_i,
\end{align}
giving a time-like ZMC-surface, where $x_1:=x$ and $x_2:=y$.
In fact, the first fundamental form of $f$ is given by
$$
ds^2=-2(1-g_1(x)g_2(y))^2 \hat \omega_1(x)\hat \omega_2(y)\,dx\,dy,
$$
and 
$$
\tilde n:=\frac{1}{-1+g_1(x)g_2(y)}
\Big( -g_1(x)+g_2(y),\,1+g_1(x)g_2(y),\,g_1(x)+g_2(y)
\Big)
$$
is a unit normal vector field of $f$.
Then
$$
I\!I:=-2 \hat \omega_1(x)g'_1(x)\,dx^2-2 \hat \omega_2(y)g'_2(y)\,dy^2
$$
is the second fundamental form, and
\begin{equation}\label{eq:995}
W_f=\frac{1}{\Delta}\pmt{
0 & \hat \omega_2(y)g'_2(y) \\
\hat \omega_1(x)g'_1(x)& 0},
\end{equation}
is the Weingarten matrix of $f$,
where
$\Delta:=(-1+g_1(x)g_2(y))^2 \hat \omega_1(x)\hat \omega_2(y)$.

\begin{Example}
In the formula \eqref{eq:413}, if we set
$g_1:=x$, $g_2:=y^2$  (resp. $g_1:=x$, $g_2:=y^3$)
and $\hat \omega_1=\hat \omega_2=1$, 
then the resulting ZMC-surface $f_1$ (resp. $f_2$)
can be written as
\begin{align*}
&f_1(x,y)=
\Big(-\frac{x^3}{3}+x+\frac{y \left(y^4-5\right)}{5} ,
x^2+\frac{2 y^3}{3},\frac{x^3}{3}+x+\frac{y^5}{5}+y\Big),
\\
&\left(\text{resp.\,\,}
f_2(x,y)=
\Big(
-\frac{x^3}{3}+x+\frac{y \left(y^6-7\right)}{7},x^2
+\frac{y^4}{2},\frac{x^3}{3}
+x+\frac{y^7}{7}+y\Big)\right).
\end{align*}
Then, one can easily check that
 $D_{f_1}$ (resp. $D_{f_2}$)
is a positive scalar multiple of $y$  (resp. $y^2$),
and $o$ is not (resp. is) an admissible 
quasi-umbilic.
\end{Example}

\subsection*{Properties of umbilics}

We consider the case where $o$ is an umbilic.

\begin{Prop}\label{thm:D}
Let $f:U\to M^3_1(c)$ be a time-like CMC immersion.
Suppose that $m_1,m_{-1}<\infty$.
If $o$ is an umbilic, then 
there exists a neighborhood $V(\subset U)$ of $o$
such that
\begin{equation}\label{eq:966}
\Xi(V)=\{o\},\qquad \Xi'(V)
=\Big((\mc L_{1}\cup \mc L_{-1})\cap V\Big)\setminus \{o\}.
\end{equation}
Moreover, for each $s\in \{1,-1\}$,
$m_s$ is positive,
and the $($unique$)$ principal direction of $f$ along
$\mc L_s\setminus\{o\}$ is parallel to the line
$\mc L_s$. 
\end{Prop}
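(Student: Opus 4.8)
The plan is to read everything off the normal form \eqref{eq:777} together with the identity \eqref{eq:3-2}, in the spirit of the proof of Lemma~\ref{lem:614}. I would first settle the ``Moreover'' claim that each $m_s$ is positive: since $o$ is an umbilic, $\hat Q_f(o)=0$ (cf.~\eqref{eq:491}), and substituting $x=y=0$ in \eqref{eq:777} gives $\epsilon_1\,0^{m_1}\psi_1(0)+\epsilon_{-1}\,0^{m_{-1}}\psi_{-1}(0)=0$; as $\epsilon_1,\epsilon_{-1}$ are $\R$-linearly independent in $\check\C$ and $\psi_1(0)\psi_{-1}(0)\ne 0$, this forces $0^{m_1}=0^{m_{-1}}=0$, i.e.\ $m_s\ge 1$ for each $s\in\{1,-1\}$.

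Next I would shrink $U$ to a connected neighborhood $V$ of $o$ on which the continuous functions $x\mapsto\psi_1(x)$ and $y\mapsto\psi_{-1}(y)$ are nowhere zero, which is possible since $\psi_1(0)\psi_{-1}(0)\ne0$. On $V$, \eqref{eq:3-2} reads $e^{4\sigma}D_f=x^{m_1}y^{m_{-1}}\psi_1(x)\psi_{-1}(y)$ with $e^{4\sigma}$ positive and $\psi_1\psi_{-1}$ nowhere zero, so, using $m_1,m_{-1}\ge1$, the zero set $\mc Z(V)$ of $D_f$ equals $\{x=0\}\cup\{y=0\}$, which by \eqref{eq:z617} is precisely $(\mc L_{-1}\cup\mc L_1)\cap V$. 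It then remains to split $\mc Z(V)$ into umbilics and quasi-umbilics, i.e.\ to locate the zeros of $\hat Q_f$ inside it: at $o$ one has $\hat Q_f(o)=0$, whereas at $q\in(\mc L_1\cap V)\setminus\{o\}$ we have $y(q)=0$ and $x(q)\ne0$, so \eqref{eq:777} gives $\hat Q_f(q)=\epsilon_1\,x(q)^{m_1}\psi_1(x(q))$, a nonzero real multiple of $\epsilon_1$; hence such a $q$ is not an umbilic, and being a point of $\mc Z(V)$ it is a quasi-umbilic. The symmetric argument works on $\mc L_{-1}\cap V$, and this gives $\Xi(V)=\{o\}$ and $\Xi'(V)=\big((\mc L_1\cup\mc L_{-1})\cap V\big)\setminus\{o\}$, which is \eqref{eq:966}.

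For the last assertion, fix $q\in\mc L_s\setminus\{o\}$; the computation just made shows $\hat Q_f(q)$ is a nonzero real multiple of $\epsilon_s$, so in the notation $a:=L(q)+N(q)$, $b:=2M(q)$ exactly one of $a+b$, $a-b$ vanishes, and $A_f(q)$ (cf.~\eqref{eq:Af}) is a nonzero trace-free matrix with $\det A_f(q)=0$ (since $D_f(q)=0$, by \eqref{eq:Af2}); thus $A_f(q)$ is a nonzero nilpotent with a unique one-dimensional eigenspace, and that eigenspace---hence the unique principal direction of $f$ along $\mc L_s\setminus\{o\}$---is identified exactly as in the proof of Lemma~\ref{lem:614}. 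I expect the only real obstacle to be the bookkeeping linking the split-coordinates $x=(u+v)/2$, $y=(u-v)/2$, the idempotents $\epsilon_{\pm1}$, and the lines $\mc L_{\pm1}$---in particular keeping straight that $\{x=0\}=\mc L_{-1}$, $\{y=0\}=\mc L_1$, and that $\hat Q_f$ is a pure $\epsilon_s$-multiple exactly along $\mc L_s$; once this is pinned down, nothing else is analytically delicate.
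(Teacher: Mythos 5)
Your treatment of the positivity of $m_1,m_{-1}$ and of the decomposition \eqref{eq:966} is correct and is essentially the paper's own argument (the paper reads both off from \eqref{eq:777} and \eqref{eq:3-2} and is much terser); your bookkeeping $\{x=0\}=\mc L_{-1}$, $\{y=0\}=\mc L_{1}$, and the observation that $\hat Q_f$ is a nonzero $\epsilon_s$-multiple exactly along $\mc L_s\setminus\{o\}$, are also right.

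The gap is in the last step, precisely where you decline to carry out the identification and write that the eigenspace ``is identified exactly as in the proof of Lemma~\ref{lem:614}.'' If you actually do this, you get the \emph{other} line. At $q\in\mc L_s\setminus\{o\}$ you have $\hat Q_f(q)=c\,\epsilon_s$ with $c\ne 0$, i.e.\ $a:=L+N=c/2$ and $b:=2M=sc/2$, so $a+(-s)b=0$; the computation in Lemma~\ref{lem:614} then says the unique eigenspace of $A_f(q)$ is spanned by $(-s,1)$, which lies on $\mc L_{-s}$, not on $\mc L_s$. This is consistent with the lemma itself, which places the quasi-umbilics on one line and the principal direction on the other (the two lines are always interchanged), and with the example $Q_f=-3z^2\,dz^2$, where the principal field $-v\frac{\partial}{\partial u}+u\frac{\partial}{\partial v}$ restricted to $v=u$ is proportional to $(-1,1)\in\mc L_{-1}$. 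So your proposal, as written, asserts the stated conclusion while the computation it defers to yields its negation; the final clause of the proposition appears to have $\mc L_s$ and $\mc L_{-s}$ interchanged, and a correct write-up must either establish ``parallel to $\mc L_{-s}$'' or flag the discrepancy explicitly. Either way, the deferral hides exactly the sign that matters.
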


\begin{proof}
Since $o$ is an umbilic, we have $\hat Q_f(o)=0$.
In particular, $m_1,m_{-1}$ are positive.
By \eqref{eq:777}
and \eqref{eq:3-2},
we have 
\eqref{eq:966}.
Thus $\mc L_s\setminus \{o\}$ ($s\in \{1,-1\}$) consists of 
quasi-umbilics, and the last assertion
follows from Lemma~\ref{lem:614}.
\end{proof}

We now prove (I) in the introduction:

\begin{proof}[Proof of {\rm (I)}]
If $o$ is an umbilic, 
then $\Xi'(V)\cup \{o\}$ 
coincides with
$(\mc L_{1}\cup \mc L_{-1})\cap V$ 
by Proposition~\ref{thm:D}.
\end{proof}

\begin{Thm}\label{thm:1078}
Let $f:U\to M^3_1(c)$ be a time-like CMC immersion.
Suppose that  $m_1$ and $m_{-1}$ are finite and positive.
Then 
$o$ is an umbilic, and 
there exists a neighborhood $V$ of $o$
satisfying the following properties:
\begin{enumerate}
\item If $m_{1}$ or $m_{-1}$ is odd,
then $\mc P(V)$ and $\mc N(V)$ are both non-empty.
In particular, $o$ is not an
admissible umbilic $($cf.\ Definition~\ref{def:0}$)$.
\item 
Suppose that $m_{1}$ and $m_{-1}$ are both even.
Then $\psi_{1}(0)\psi_{-1}(0)$ 
is negative if and only if 
\begin{equation}\label{eq:Vstar}
V_*:=V\setminus (\mc L_1\cup \mc L_{-1})
\end{equation}
consists only of negative points.
On the other hand,
if $\psi_{1}(0)\psi_{-1}(0)$
is positive,
then there exists a pair of $C^\infty$ differentiable 
curvature line flows $(\mc F_1,\mc F_2)$ on $V$
satisfying the following properties:
\begin{enumerate}
\item 
If $f$ is real analytic, then 
$\mc F_i$ $(i=1,2)$
are real analytic, and
any real analytic curvature line flow of $f$ coincides with
$\mc F_1$ or $\mc F_2$. 
\item
When  $m_1/2$ and $m_{-1}/2$ are odd integers,
then the indices of the flows 
$\mc F_i$ $(i=1,2)$ at $o$ are $\pm 1$.
On the other hand, if
either $m_1/2$ or $m_{-1}/2$ is even,
then the indices of the flows 
$\mc F_i$ $(i=1,2)$ are equal to zero.
\end{enumerate}
\end{enumerate}
\end{Thm}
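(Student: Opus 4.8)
The plan is to read off all the assertions from the normal form \eqref{eq:777} of $\hat Q_f$. Since $m_1,m_{-1}>0$ we have $\hat Q_f(o)=0$, so $o$ is an umbilic. Two facts carry the proof. First, by \eqref{eq:27}, \eqref{eq:950} and \eqref{eq:618},
\begin{equation*}
e^{4\sigma}D_f=N^2(\hat Q_f(z))=x^{m_1}y^{m_{-1}}\psi_1(x)\psi_{-1}(y),
\end{equation*}
so the sign of $D_f$ near $o$ is dictated by $x^{m_1}y^{m_{-1}}$ together with the sign of $\psi_1(0)\psi_{-1}(0)$. Second, by the Remark following \eqref{eq:Af} the curvature directions are the eigendirections of $A_f$, and by Fact~\ref{fact:329} these are precisely the directions annihilating the para-imaginary part of $\hat Q_f\,dz^2$; substituting \eqref{eq:931} and $dz^2=4\epsilon_1\,dx^2+4\epsilon_{-1}\,dy^2$, a short computation shows this para-imaginary part is a nonzero constant multiple of $\phi_1(x)\,dx^2-\phi_{-1}(y)\,dy^2$, so the curvature lines of $f$ satisfy
\begin{equation*}
\phi_1(x)\,dx^2=\phi_{-1}(y)\,dy^2,\qquad\text{i.e.}\qquad x^{m_1}\psi_1(x)\,dx^2=y^{m_{-1}}\psi_{-1}(y)\,dy^2 .
\end{equation*}
After shrinking $V$ I may assume $\psi_1$ and $\psi_{-1}$ vanish nowhere on their domains. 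Part (1) is then immediate: if $m_1$ or $m_{-1}$ is odd, then $x^{m_1}y^{m_{-1}}\psi_1(x)\psi_{-1}(y)$ changes sign when $x$, respectively $y$, passes through $0$, so $D_f$ takes both signs in every neighborhood of $o$; hence $\mc P(V)$ and $\mc N(V)$ are non-empty and, by Definition~\ref{def:260}, no neighborhood of $o$ carries real-valued principal curvatures, i.e.\ $o$ fails to be admissible in the sense of Definition~\ref{def:0}.

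For (2) I write $m_1=2k_1$ and $m_{-1}=2k_{-1}$ with $k_1,k_{-1}\ge1$. On $V_*=V\setminus(\mc L_1\cup\mc L_{-1})$ one has $x^{m_1}>0$ and $y^{m_{-1}}>0$, so $\op{sign}(D_f)=\op{sign}(\psi_1(0)\psi_{-1}(0))$ there; this gives the first assertion of (2). Suppose now $\psi_1(0)\psi_{-1}(0)>0$ and pick $\delta\in\{1,-1\}$ with $\delta\psi_1$ and $\delta\psi_{-1}$ positive on $V$. Setting
\begin{equation*}
P(x):=x^{k_1}\sqrt{\delta\psi_1(x)},\qquad \tilde P(y):=y^{k_{-1}}\sqrt{\delta\psi_{-1}(y)},
\end{equation*}
which are $C^\infty$, and real analytic if $f$ is (since $\delta\psi_s>0$), we get $\phi_1=\delta P^2$ and $\phi_{-1}=\delta\tilde P^2$, so the curvature-line equation factors as $(P\,dx-\tilde P\,dy)(P\,dx+\tilde P\,dy)=0$. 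I then let $\mc F_1$ and $\mc F_2$ be the flows of the vector fields $V_1:=\tilde P(y)\,\partial/\partial x+P(x)\,\partial/\partial y$ and $V_2:=\tilde P(y)\,\partial/\partial x-P(x)\,\partial/\partial y$: each spans a principal direction at every point of $V_*$, agrees with the unique principal direction along $(\mc L_1\cup\mc L_{-1})\setminus\{o\}$ (cf.\ Lemma~\ref{lem:614}), and vanishes only at $o$, so $\mc F_1,\mc F_2$ are $C^\infty$ (resp.\ real analytic) curvature-line flows on $V$ with an isolated singularity at $o$.

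For (2)(a) I would argue that any real analytic curvature-line flow $\mc F$ must, on each of the four open sectors of $V_*$, equal $\ker(P\,dx-\tilde P\,dy)$ or $\ker(P\,dx+\tilde P\,dy)$; if it switched from one to the other across an arc of $\mc L_1\setminus\{o\}$, then the slope $dx/dy$ of its leaves near that arc would be $y^{k_{-1}}\rho$ on one side and $-y^{k_{-1}}\rho$ on the other, with $\rho$ real analytic and non-vanishing, and such a function is only of class $C^{k_{-1}-1}$ at $\{y=0\}$, contradicting real analyticity; similarly across $\mc L_{-1}\setminus\{o\}$. Since $V\setminus\{o\}$ is connected, $\mc F$ therefore coincides with $\mc F_1$ or $\mc F_2$.

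For (2)(b), the index of $\mc F_i$ at $o$ equals the index of $V_i$ at its isolated zero $o$, and since the factors $\sqrt{\delta\psi_s}$ are positive this index is the winding number, along a small circle about $o$, of $(x,y)\mapsto(y^{k_{-1}},x^{k_1})$ for $i=1$ and of $(x,y)\mapsto(y^{k_{-1}},-x^{k_1})$ for $i=2$; these two maps differ by an orientation-reversing reflection of the target, so $i_{\mc F_2}=-i_{\mc F_1}$. A direct computation of the winding number of $\theta\mapsto(\sin^{k_{-1}}\theta,\cos^{k_1}\theta)$ gives $0$ when $k_1$ or $k_{-1}$ is even (the image then stays in a half-plane of $\R^2\setminus\{0\}$, which is simply connected) and $\pm1$ when both are odd (the image sweeps the four quadrants in the same cyclic order as $\theta\mapsto(\sin\theta,\cos\theta)$); hence $\{i_{\mc F_1},i_{\mc F_2}\}=\{1,-1\}$ when $m_1/2$ and $m_{-1}/2$ are both odd, and both indices vanish otherwise. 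The step I expect to be the main obstacle is the rigidity in (2)(a): ruling out curvature-line flows that interpolate between the two factor-foliations across the quasi-umbilic lines $\mc L_{\pm1}$. This is exactly where the finiteness and parity of $k_1,k_{-1}$ are used; once this is in place, (2)(b) is an elementary winding-number count and (1) is an immediate sign computation.
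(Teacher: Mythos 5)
Your proof is correct and follows essentially the same route as the paper: after reducing to the normal form \eqref{eq:777}, your vector fields $V_1=\tilde P\,\partial/\partial x+P\,\partial/\partial y$ and $V_2=\tilde P\,\partial/\partial x-P\,\partial/\partial y$ are exactly the paper's eigenvector fields $X_1,X_2$ of $A_f$ rewritten in the null coordinates $(x,y)$, and your winding-number count for (2)(b) is the same index computation phrased differently. The one substantive addition is your sign-switching argument for the uniqueness claim in (2)(a) (ruling out real analytic flows that interpolate between the two factor foliations across $\mc L_{\pm 1}\setminus\{o\}$), which the paper's proof asserts but does not spell out; that argument is sound and worth recording.
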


\begin{proof}
(1) can be proved easily.
So we assume that $m_1$ and $m_{-1}$ are both even
and positive.
Then 
$\psi_{1}(0)\psi_{-1}(0)<0$
holds
if and only if 
$N^2(\hat Q_f(z))$ is negative
on $V_*$ (cf. \eqref{eq:Vstar})
for a sufficiently small $V$.
In this case, 
$V_*$ consists 
only of negative points, proving 
the first part of (2).

Now we consider the case that $\psi_{1}(0)\psi_{-1}(0)>0$.
Then $N^2(\hat Q_f(z))$ is positive on $V_*$
for a sufficiently small $V$.
Since $m_1$ and $m_{-1}$ are both even,
we can write $m_1=2n_1$ and $m_{-1}=2n_{-1}$,
so that
\begin{equation}\label{eq:1111}
N^2(\hat Q_f)=\psi_1(x)\psi_{-1}(y) 
x^{2n_1}y^{2n_{-1}}\ge 0
\end{equation}
holds.
Hence we can write
$$
\hat Q_f=\delta \Big(\epsilon_1\alpha(x)x^{2n_1}
+\epsilon_{-1}\beta(y)y^{2n_{-1}}\Big)
\qquad (\delta \in \{1,-1\}),
$$
where $\alpha(0)$ and $\beta(0)$ are positive.
In particular, $\sqrt{\alpha(x)}$ and  $\sqrt{\beta(y)}$
can be regarded as smooth functions when $|x|$ and $|y|$
are sufficiently small.
Therefore,
(cf.~\eqref{eq:Af})
$$
A_f=\frac{\delta}{4} 
\pmt{\alpha(x)x^{2n_1}+\beta(y)y^{2n_{-1}}&
\alpha(x)x^{2n_1}-\beta(y)y^{2n_{-1}} \\
-\alpha(x)x^{2n_1}+\beta(y)y^{2n_{-1}}
& -\alpha(x)x^{2n_1}-\beta(y)y^{2n_{-1}}}
$$
and
\begin{align*}
&X_1:=\Big(x^{n_1}\sqrt{\alpha(x)}+y^{n_{-1}}\sqrt{\beta(y)}
\Big)\frac{\partial}{\partial u}+
\Big(-x^{n_1}\sqrt{\alpha(x)}+y^{n_{-1}}\sqrt{\beta(y)}\Big)
\frac{\partial}{\partial v}, \\
&X_2:=\Big(-x^{n_1}\sqrt{\alpha(x)}+y^{n_{-1}}\sqrt{\beta(y)}
\Big)\frac{\partial}{\partial u}
+ \Big(x^{n_1}\sqrt{\alpha(x)}+y^{n_{-1}}\sqrt{\beta(y)}\Big)
\frac{\partial}{\partial v}
\end{align*}
are eigenvector fields of $A_f$.
If $f$ is real analytic, then $(u,v)$ is a  real analytic
isothermal coordinate system,
so $\sqrt{\alpha(x)}$ and $\sqrt{\beta(y)}$ are real analytic functions,
and $X_i$ ($i=1,2$) are real analytic vector fields.

Since the absolute values of
the indices of $X_1$ and $X_2$ are the same,
we discuss the index of $X_1$.
Moreover, the index of $X_1$ does not depend on the 
choices of $\alpha$
and $\beta$, and hence we may set $\alpha=\beta=1$.
In this case, if $n_1$ (resp.~$n_{-1}$)
is even, $X_1$ is never positively proportional to $(-1,1)$
(resp.~$(-1,-1)$);
thus the index of $X_1$ at $o$ is equal to zero.
On the other hand,
when $n_{1}$ and $n_{-1}$ are both odd, one can easily
observe that
the index of $X_1$ at $p$ is equal to $1$.
\end{proof}

We now prove (II) in the introduction:

\begin{proof}[Proof of {\rm (II)}]
If $Q_f$ is non-degenerate at $o(:=p)$, then by \eqref{eq:614},
we can write $\hat Q_f=R(z) z^m$
and $N^2(R(o))\ne 0$.
In this case, 
we have $m=m_1=m_{-1}>0$, and
the sign of $N^2(R(o))$ coincides with that of
$\psi_1(0)\psi_{-1}(0)$.
Therefore, the assertions of (II) follow directly from
Theorem~\ref{thm:1078}.
\end{proof}

\begin{Exa}
We set $g:=z^2$ and $\omega:=dz$.
By
\eqref{eq:H666},
the Hopf differential is $Q_f=-2 zdz^2$, and so $m=1$ and $R(z)=-2$.
By \eqref{eq:KO}, we obtain a ZMC-immersion
\begin{align*}
f(u,v)
&=
\Big(\frac{-v\left(5 u^4+10 u^2 v^2+v^4-5\right)}{5},
\frac{2u\left(u^2+3v^2\right)}{3} , \\
&\phantom{aaaaaaaaaaaaaaaaaaaaaaaaaaaa}
\frac{u^5+10 u^3 v^2+5u v^4+5u}5\Big),
\end{align*}
such that $o$ 
is a non-admissible umbilic which is not
an accumulation point of umbilics.  
In fact, $D_f(u,v)(u^2-v^2)$ ($|u|\ne |v|$)
is positive on a sufficiently small neighborhood of $o$ in $\R^2$.
\end{Exa}

\begin{Exa}
We set $(g,\omega):=(z^3,dz)$ and substitute this into
\eqref{eq:KO}. Then 
the  ZMC-immersion
\begin{align*}
f(u,v)
&=
\Big(\frac{-v\left(7 u^6+35 u^4 v^2+21 u^2 v^4+v^6-7\right)}{7},\\
&\phantom{aaaaaaa}\frac{u^4+6 u^2 v^2+v^4}{2},
\frac{u\left({u^6}+21 u^4 v^2+35 u^2 v^4+7v^6+7\right)}7\Big)
\end{align*}
satisfying
$Q_f=-3z^2dz^2$ (cf.~\eqref{eq:H666})
is obtained.
In particular, we have $m=2$, and $o$ is an 
isolated umbilic.
A real analytic vector field $V:=-v\frac{\partial}{\partial u}+u \frac{\partial}{\partial v}$
points in the principal directions of 
$f$, having index $1$ at the origin $o$.
\end{Exa}

\begin{Exa}
We set $(g,\omega)=(z^5,dz)$ and substitute this into
\eqref{eq:KO}. Then 
the  ZMC-immersion
\begin{align*}
f(u,v)
&=
\Big(-u^{10} v-15 u^8 v^3-42 u^6 v^5-30 u^4 v^7-5 u^2 v^9-\frac{v^{11}}{11}+v, \\
&\phantom{aaaaa}
\frac{1}{3} \left(u^2+v^2\right) \left(u^4+14 u^2 v^2+v^4\right), \\
&\phantom{aaaaaaaaaa}
\frac{u^{11}}{11}+5 u^9 v^2+30 u^7 v^4+42 u^5 v^6+15 u^3 v^8+u v^{10}+u\Big)
\end{align*}
having $Q_f=-5z^4dz^2$ 
is obtained.
In particular, $m=4$ holds, and $o$ is 
an isolated umbilic.
In this setting,
$X:=(u^2+v^2)\frac{\partial}{\partial u}-2uv\frac{\partial}{\partial v}$ 
is a real analytic vector field 
pointing in the principal directions of $f$
such that the index of $X$ at the origin $o$ is equal to $0$.
\end{Exa}

Finally, we give an example such that $Q_f$ is degenerate:

\begin{Exa}
In the formula \eqref{eq:413}, if we set
$g_1:=x^3$, $g_2:=y^7$
and $\hat \omega_1=\hat \omega_2=1$, then the resulting ZMC-surface $f$ 
has $m_1=2$ and $m_{-1}=6$, which 
can be written as
$$
f(x,y)=
\left(-\frac{x^7}{7}+x+\frac{y^{15}}{15}-y,\,\frac{2 x^4+y^8}{4},\,\frac{x^7}{7}+x+\frac{y^{15}}{15}+y\right).
$$
This surface has an isolated umbilic 
at the origin. By \eqref{eq:995},
the vector fields 
$\mp\sqrt{7}y^3\frac{\partial}{\partial x}
+\sqrt{3}x\frac{\partial}{\partial y}$ 
point in the principal directions of $f$,
and their indices at the origin are equal to $\pm 1$.
\end{Exa}

\begin{acknowledgements}
The authors thank 
Shintaro Akamine and Wayne Rossman for their valuable comments.
\end{acknowledgements}

\appendix
\section{Space-like CMC surfaces}

In this appendix, 
we show that the local behavior of curvature line flows of
space-like CMC surfaces 
in $M^3_1(c)$ ($c\in \R$)
is quite similar to that of CMC surfaces in the Euclidean space.

Let $f:U\to M^3_1(c)$ ($c\in \R$) be
a space-like CMC surfaces defined on a domain $U$
in the $uv$-plane $\R^2$.
For each $p\in U$,
there exists an isothermal coordinate 
system centered at $p$.
Hence we may assume that the coordinate system $(u,v)$ is
an isothermal coordinate system on $U$.
Then the first fundamental form of $f$
can be expressed as
$
ds^2=e^{2\sigma}(du^2+dv^2),
$
where $\sigma$ is a smooth function on $U$.
We first consider the case $c=0$, that is,
$M^3_1(c):=\R^3_1$.
Let $\nu$ be a unit normal vector field of $f$ defined on $U$.
If we set 
$
\bm F=(f_u,f_v,\nu),
$
then this frame field satisfies the following Frenet-type equations:
$\bm F_u=\bm F\Omega$ and $\bm F_v=\bm F\Lambda$,
where
$$
\Omega=\pmt{
\sigma_u & \sigma_v & -e^{-2\sigma}L \\
-\sigma_v & \sigma_u & -e^{-2\sigma}M \\
-L               &   -M             & 0  
},
\qquad
\Lambda=\pmt{
\sigma_v & -\sigma_u & -e^{-2\sigma}M \\
\sigma_u & \sigma_v & -e^{-2\sigma}N \\
-M               &   -N             & 0  
}.
$$
The integrability condition of this system is
$
\Lambda \Omega+\Omega_v=\Omega\Lambda +\Lambda_u.
$
Computing this, we obtain the following three relations:
\begin{align}\label{eq:Gauss2}
&LN-M^2=
e^{-2 \sigma}(\sigma_{uu}+\sigma_{vv}), \\[4pt]
&L_v-M_u=\sigma_v (L+N), \quad \label{eq:Cod2}
N_u-M_v=\sigma_u (L+N).
\end{align}
The first one
is the Gauss equation, and the others are the Codazzi equations.
Similarly, if $f$ is a map into the de Sitter 3-space in $\R^4_1$
or the anti-de Sitter space in $\R^4_2$, then 
by similar computations,
we can show that the Codazzi equations are
written as in \eqref{eq:Cod2}.
The mean curvature function of $f$ is given by
$
H:=(1/2)e^{-2\sigma}(L+N).
$
We now assume that $H$ is constant.
Then, as in the case of surfaces in the Euclidean 3-space $\E^3$,
we obtain the following:

\begin{Prop}
The function $\hat Q_f:=(L-N)-2i M$ is holomorphic
as a function of $z:=u+i v$, where
$i:=\sqrt{-1}$. 
\end{Prop}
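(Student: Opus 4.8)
The plan is to parallel the $c=0$ computation already carried out in the time-like case: derive a Frenet-type system, write down its integrability condition, extract the Gauss and Codazzi equations, combine the Codazzi equations with the hypothesis that $H$ is constant, and read off the Cauchy--Riemann equations for $\hat Q_f=(L-N)-2iM$. The only genuine difference from the space-like/Euclidean case is the signature of the ambient metric, but since $f$ is space-like the induced metric is Riemannian, so the Hopf differential is a genuine holomorphic quadratic differential exactly as in $\E^3$.

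First I would use the frame $\bm F=(f_u,f_v,\nu)$ with $\bm F_u=\bm F\Omega$, $\bm F_v=\bm F\Lambda$, where $\Omega,\Lambda$ are the matrices displayed just above the statement (the ones already written out in the appendix for $c=0$). Writing out $\Lambda\Omega+\Omega_v=\Omega\Lambda+\Lambda_u$ entrywise yields the Gauss equation $LN-M^2=e^{-2\sigma}(\sigma_{uu}+\sigma_{vv})$ together with the two Codazzi equations \eqref{eq:Cod2}, namely $L_v-M_u=\sigma_v(L+N)$ and $N_u-M_v=\sigma_u(L+N)$. Next, since $H=\tfrac12 e^{-2\sigma}(L+N)$ is constant, differentiating gives $0=H_u$ and $0=H_v$, i.e. $(L+N)_u=2\sigma_u(L+N)$ and $(L+N)_v=2\sigma_v(L+N)$; substituting these into the right-hand sides of the Codazzi equations converts them into $L_v-M_u=\tfrac12(L+N)_v$ and $N_u-M_v=\tfrac12(L+N)_u$. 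Rearranging, $(L-N)_v=2M_u$ and $(L-N)_u=-2M_v$, which are precisely the Cauchy--Riemann equations for the function $(L-N)-2iM$ with respect to $z=u+iv$; hence $\hat Q_f$ is holomorphic. For $c\ne 0$ one invokes the remark already made in the appendix that the Codazzi equations take the identical form \eqref{eq:Cod2} in the de Sitter and anti-de Sitter cases, so the same manipulation applies verbatim.

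I do not expect a serious obstacle here, since this is the standard Hopf-differential argument and the relevant structure equations are already recorded in the excerpt; the only thing to be careful about is the bookkeeping of signs in passing from $\Lambda\Omega+\Omega_v=\Omega\Lambda+\Lambda_u$ to the explicit Codazzi equations (the placement of minus signs differs from the time-like case because the induced metric is $e^{2\sigma}(du^2+dv^2)$ rather than $e^{2\sigma}(du^2-dv^2)$) and in making sure the constant-$H$ substitution is done on the correct side of each Codazzi equation. If one prefers, one can shortcut the entire derivation by citing the Euclidean computation: once the Gauss and Codazzi equations \eqref{eq:Gauss2}--\eqref{eq:Cod2} are in hand, the rest is formally identical to the proof that the Hopf differential of a CMC surface in $\E^3$ is holomorphic, so a one-line reference together with the observation that $f$ space-like makes $z=u+iv$ a genuine complex (not paracomplex) coordinate suffices.
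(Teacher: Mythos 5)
Your proposal is correct and follows exactly the route the paper intends: the paper states this Proposition without proof, deferring to the standard Euclidean argument, and your derivation (substituting $(L+N)_u=2\sigma_u(L+N)$, $(L+N)_v=2\sigma_v(L+N)$ from constancy of $H$ into the Codazzi equations \eqref{eq:Cod2} to obtain $(L-N)_u=-2M_v$ and $(L-N)_v=2M_u$, which are the Cauchy--Riemann equations for $(L-N)-2iM$) is precisely that argument, carried out correctly. No gaps.
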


We call $Q_f:=\hat Q_f dz^2$ the {\it Hopf differential} of $f$.
Imitating the case of CMC surfaces in
 $\E^3$ (cf.~Hopf~\cite{H}),
the following assertion can be shown:

\begin{Fact}\label{f1144}
Let  $f:U\to M^3_1(c)$ be a space-like 
CMC surface with an umbilic $p$ that is not totally umbilic.
Then $p$ is an isolated umbilic of $f$, and the index of $p$ is $-m/2$,
where $m(\ge 1)$ is the order of the 
zero of the Hopf differential of $f$ at $p$.
\end{Fact}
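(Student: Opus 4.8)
The statement (Fact~\ref{f1144}) is the space-like analogue of Hopf's classical theorem on isolated umbilics of CMC surfaces in $\E^3$, so the plan is to transcribe Hopf's argument into the Lorentzian setting, where the relevant computations have already been carried out in the appendix. The key input is the preceding Proposition: since $f$ has constant mean curvature, the function $\hat Q_f=(L-N)-2iM$ is holomorphic in $z=u+iv$. First I would observe that, because $f$ is not totally umbilic, $\hat Q_f$ does not vanish identically; hence the zero $p$ of $\hat Q_f$ has finite order $m\ge 1$, and we may write $\hat Q_f(z)=z^m\,g(z)$ with $g$ holomorphic and $g(p)\ne 0$ near $p$. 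In particular the zeros of $\hat Q_f$ are isolated, so $p$ is an isolated umbilic (umbilics being exactly the zeros of $\hat Q_f$, by the same reasoning as in the time-like case using Fact~\ref{fact:329} with the Euclidean sign conventions).

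Next I would set up the curvature line flow explicitly. As in the Euclidean case, the trace-free part $A_f$ of $e^{2\sigma}W_f$ has $\hat Q_f$ as its coefficients, and an eigendirection of $W_f$ at a point is determined by the condition that the corresponding quadratic form in $(du,dv)$, namely $\Re\bigl(\hat Q_f\,dz^2\bigr)$ (up to a sign), vanishes. Thus the curvature lines through a punctured neighborhood of $p$ are the integral curves of the line field defined by $\Im\bigl(\sqrt{\hat Q_f}\,dz\bigr)=0$ (equivalently $\Re=0$ for the perpendicular flow). Writing $\hat Q_f(z)=z^m g(z)$ and choosing a local holomorphic branch of $\sqrt{g(z)}$ near $p$ (possible since $g(p)\ne 0$), the line field is that of $z^{m/2}\sqrt{g(z)}\,dz$, whose index at $z=0$ is governed entirely by the factor $z^{m/2}$: a standard computation (the index of the line field $\Im(z^{k}\,dz)=0$ at the origin is $1-k$ when... — more precisely, tracking the argument of $z^{m/2}\,dz$ as $z$ traverses a small loop) gives index $-m/2$ for one of the two orthogonal curvature line flows, and $+m/2$ would be the other; the statement picks out the value $-m/2$.

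The main obstacle is making the index count clean and sign-correct: one must be careful that $m/2$ need not be an integer, so "$\sqrt{\hat Q_f}$" is genuinely multivalued and one works with a line field rather than a vector field, and the index of a line field is computed by following the direction angle (which changes by $\pi$, not $2\pi$, around a loop of the line field). The bookkeeping is: the direction of the line field rotates by $-\pi m$ as $z$ goes once around $p$ (the $\sqrt{g}$ factor contributes nothing since $g\ne 0$, and the $dz$ contributes the standard $-$ sign relating the coordinate differential to the curvature-line direction), giving index $-m/2$. I would present this as a direct adaptation of Hopf~\cite{H}, citing that reference for the detailed index computation and noting that the only change from the Euclidean case is the placement of signs in $\Omega,\Lambda$ and in $\hat Q_f=(L-N)-2iM$, which have already been recorded above; the holomorphicity and the umbilic $=$ zero-of-$\hat Q_f$ correspondence are exactly parallel, so Hopf's conclusion transfers verbatim.
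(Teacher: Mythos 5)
Your overall strategy coincides with the paper's: the paper gives no proof of Fact~\ref{f1144} beyond the remark that it follows by imitating Hopf's Euclidean argument, and your write-up is precisely that transcription --- holomorphicity of $\hat Q_f$ from the Codazzi equations, finiteness and isolatedness of the zero of order $m$ (using that $\hat Q_f\not\equiv 0$ since $f$ is not totally umbilic), and the winding computation for the line field $\Im\bigl(\sqrt{\hat Q_f}\,dz\bigr)=0$ giving index $-m/2$. Two corrections are needed, one of which matters. The harmless one: the curvature lines are the null directions of $\Im\bigl(\hat Q_f\,dz^2\bigr)$, not of $\Re\bigl(\hat Q_f\,dz^2\bigr)$; your subsequent formula $\Im\bigl(\sqrt{\hat Q_f}\,dz\bigr)=0$ is consistent with the correct version, so this is only a slip in the prose. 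The one that matters: your parenthetical claim that one of the two orthogonal curvature line flows has index $-m/2$ while ``$+m/2$ would be the other'' is false in the space-like (Riemannian) setting. Pointwise rotation by $\pi/2$ in an isothermal chart is orientation-preserving and provides a homotopy through nowhere-vanishing line fields, so the two orthogonal principal foliations have the \emph{same} index $-m/2$; this is exactly what makes the phrase ``the index of $p$'' in the statement well defined, and your version would render the statement ambiguous. You appear to be importing the phenomenon of Proposition~\ref{thm:633A}, where the perpendicular flow does reverse the index; but that rests on the map $(x_1,x_2)\mapsto(x_2,x_1)$ being orientation-reversing, a feature of the time-like isothermal structure with no analogue here. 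With that parenthetical deleted, your argument is the standard one and is correct.
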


Conversely, the following assertion holds:

\begin{Prop}
For each positive integer $m$,
there exists a space-like ZMC-immersion 
with an isolated umbilic at which the Hopf differential has a zero
of order $m$.
\end{Prop}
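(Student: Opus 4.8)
The plan is to imitate, on the space-like side, the Weierstrass-type construction \eqref{eq:KO} used above for time-like ZMC-surfaces --- equivalently, to imitate the classical Weierstrass representation of minimal surfaces in $\E^3$. Recall that the Lorentzian counterpart of \eqref{eq:KO} (Kobayashi's representation for maximal surfaces) assigns to a holomorphic function $g$ on a domain, with $g(o)=0$, and a holomorphic $1$-form $\omega=\hat\omega(z)\,dz$ with $\hat\omega(o)\ne 0$, a map $f$ which is a space-like ZMC-immersion on the open set where $|g|\ne 1$, and whose Hopf differential is a nonzero constant multiple of $\omega\,dg$, exactly parallel to the identity $Q_f=-\omega\,dg$ of \eqref{eq:H666}. (This identity follows from the same computation as in the time-like case, with the para-imaginary unit $j$ replaced by $i=\sqrt{-1}$.) The first step is to record this representation carefully enough that regularity and the formula for $Q_f$ are available.

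Next I would fix an integer $m\ge 1$ and take the data
\[
g(z):=z^{m+1},\qquad \omega:=dz
\]
on the unit disk $U:=\{z\in\C\,;\,|z|<1\}$. Then $g$ is holomorphic with $g(o)=0$, the coefficient $\hat\omega\equiv 1$ is nowhere zero, and $|g|=|z|^{m+1}<1$ on $U$; hence the representation yields a space-like ZMC-immersion $f:U\to\R^3_1$ whose Hopf differential is
\[
Q_f=c_0\,\omega\,dg=c_0\,(m+1)\,z^{m}\,dz^2
\]
for a nonzero constant $c_0$. Thus $\hat Q_f(z)=c_0(m+1)z^m$ is a holomorphic function on $U$ whose only zero is $o$, and the order of that zero is exactly $m$.

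Finally I would identify $o$ as an isolated umbilic of $f$ of order $m$. A point of a space-like surface is an umbilic exactly when $L-N=M=0$, i.e.\ when $\hat Q_f=(L-N)-2iM$ vanishes (mirroring Fact~\ref{fact:329}); since $\hat Q_f$ vanishes only at $o$ on $U$, the point $o$ is an isolated umbilic of $f$. (Alternatively, one may invoke Fact~\ref{f1144} directly: $f$ is not totally umbilic because $\hat Q_f\not\equiv 0$, so any umbilic of $f$ is automatically isolated.) The order of the zero of the Hopf differential at $o$ --- the quantity $m$ appearing in Fact~\ref{f1144} --- equals $m$ by the computation above, which completes the argument for $M^3_1(0)=\R^3_1$; the cases $c\ne 0$ follow in the same way from the corresponding Weierstrass-type representations for maximal surfaces in the de Sitter and anti-de Sitter $3$-spaces. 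The step I expect to be the main obstacle is not the choice of data but making the space-like Weierstrass representation and the proportionality $Q_f\propto\omega\,dg$ precise enough to guarantee that $f$ is a genuine (unbranched) space-like immersion near $o$ and that the zero of $Q_f$ there has order exactly, not merely at least, $m$; both are ensured by $g(o)=0$ together with $\hat\omega(o)\ne 0$ and $dg$ having a zero of order exactly $m$ at $o$, and the remaining verifications run parallel to \eqref{eq:KO}--\eqref{eq:H666}.
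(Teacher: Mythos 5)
Your proposal is correct and follows essentially the same route as the paper: the paper also takes Kobayashi's representation with $\omega=dz$ and $g$ a constant multiple of $z^{m+1}$ (normalized as $g=-z^{m+1}/(m+1)$ so that $Q_f=-\omega\,dg=z^m\,dz^2$ exactly), and then invokes Fact~\ref{f1144} to conclude the umbilic at $o$ is isolated. Your extra care about restricting to $\{|g|<1\}$ to keep the immersion regular is a sensible refinement of the same argument.
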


Indeed,
if we set 
$g:=-z^{m+1}/(m+1)$ and $\omega:=dz$
on the complex plane $(\C,z)$,
where $m$ is a positive integer and $z=u+iv$, then 
the representation formula (cf.~Kobayashi \cite{OK})
$
f(u,v):=\op{Re}\int_o^z(1+g^2,i(1-g^2),2g)\omega
$
gives
a space-like ZMC-immersion 
whose Hopf differential $Q_f$ is $-\omega dg=z^m dz^2$. 
By Fact~\ref{f1144}, the index of the curvature line flows
at $o$ is $-m/2$.

\section{Properties of smooth para-holomorphic functions}

We set
\begin{equation} \label{eq:e1e2}
\epsilon_1:=\frac{1+j}2,\qquad \epsilon_{-1}:=\frac{1-j}2.
\end{equation}
Then, clearly,
\begin{equation}\label{eq:eee}
\epsilon_1 \epsilon_{-1}=0, \qquad
\overline{\epsilon_s}=\epsilon_{-s},\qquad
\epsilon_s^2=\epsilon_s
\qquad (s\in \{1,-1\}).
\end{equation}
For $z\in \check \C$, there exists a unique
real number
$\Pi_s(z)$ for $s\in \{1,-1\}$ such that
$$
z=\Pi_1(z)\epsilon_1+\Pi_{-1}(z)\epsilon_{-1},\quad
\Pi_s(z)=u+s v \qquad (u:=\op{Re}(z),\,\, v:=\op{Im}(z)).
$$
For each $s\in \{1,-1\}$, we have
\begin{align}
&
\Pi_s(z+w)=\Pi_s(z)+\Pi_s(w), \quad
 \Pi_s(c z)=c \Pi_s(z)\quad
(c\in \R,\,\,z,w\in \check \C).
\end{align}
Moreover, for $z\in \check \C$, we have
\begin{align}
&  \label{eq:c0}
\Pi_s(\bar z)=\Pi_{-s}(z),\qquad
 \Pi_s\circ \Pi_s(z)=\Pi_s(z), \\[2mm]
& N^2(z)
=u^2-v^2=\Pi_1(z) \Pi_{-1}(z). \label{eq:c0b}
\end{align}

Let $A(u,v)$ and $B(u,v)$ be real-valued smooth functions
defined on a domain $U$ in $\check \C$.
We consider the function $h(z)$ ($z=u+jv$)
defined by $h(z)=A(u,v)+j B(u,v)$.
If we set
$$
h_z:=\frac12 (h_u+j h_v),\qquad
h_{\bar z}:=\frac12 (h_u-j h_v),
$$
then $h_{\bar z}$ vanishes identically if and only if $h$ is
para-holomorphic,
that is,
$A$ and $B$ satisfy the para-Cauchy--Riemann equations
$A_u=B_v$ and $A_v=B_u$.
In this case, the derivative of $h$
is defined as a para-holomorphic function 
\begin{equation}\label{eq:P}
\frac{dh}{dz}:=X_u+j Y_u~(=h_z),
\end{equation}
where $X:=\op{Re}(h)$ and $Y:=\op{Im}(h)$.
If $\phi_i$ ($i=1,2$) are smooth functions defined 
on open intervals $I_i(\subset \R)$, respectively,
then
\begin{align}\label{X}
\phi_1\vee \phi_2(u,v):=
\frac{\phi_1(u+v)+\phi_2(u-v)}2+j\left
(\frac{\phi_1(u+v)-\phi_2(u-v)}2\right)
\\ \nonumber
\phantom{aaaaaaaaaaaaaaaaaaaaaaaa} (u+v\in I_1,\,\, u-v\in I_2)
\end{align}
is a para-holomorphic function.
Conversely,
the following fact is well-known: 

\begin{Fact}\label{fact:A1}
Let $h$ be a para-holomorphic function on $U$.
Then, for each $p\in U$, there exist $\epsilon>0$
and two smooth functions $\phi_i:(-\epsilon,\epsilon)\to \R$
$(i=1,2)$ satisfying $\phi_i(0)=0$, such that 
$h(u,v)=h(p)+\phi_1 \vee \phi_2(u,v)$
holds for all $u+v,u-v\in (-\epsilon,\epsilon)$.
\end{Fact}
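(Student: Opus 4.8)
The plan is to reduce the assertion to an elementary decoupling of the para-Cauchy--Riemann equations under the linear change of variables $(u,v)\mapsto(u+v,\,u-v)$, which is the split-complex analogue of solving a one-dimensional wave equation by characteristics. After a translation of coordinates we may assume $p=o$, and since only the germ of $h$ at $o$ is needed we are free to shrink $U$; this shrinking is carried out at the end.

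First I would pass to the $\epsilon_{\pm1}$-components of $h$. Writing $h=A+jB$ with $A,B$ real-valued and smooth, para-holomorphicity means $A_u=B_v$ and $A_v=B_u$. Put $P:=A+B$ and $Q:=A-B$ (so $P=\Pi_1(h)$ and $Q=\Pi_{-1}(h)$ pointwise), and note that $h=\epsilon_1 P+\epsilon_{-1}Q$ by \eqref{eq:e1e2}. Adding and subtracting the two para-Cauchy--Riemann equations yields
\begin{equation*}
P_u=P_v,\qquad Q_u=-Q_v.
\end{equation*}
Introducing $x:=u+v$ and $y:=u-v$, one has $\partial/\partial u+\partial/\partial v=2\,\partial/\partial x$ and $\partial/\partial u-\partial/\partial v=2\,\partial/\partial y$, so the first equation says exactly that $P$, viewed as a function of $(x,y)$, satisfies $\partial P/\partial y\equiv0$, while the second says $\partial Q/\partial x\equiv0$. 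Now choose $\epsilon>0$ so small that the (rotated) square $R:=\{\,|u+v|<\epsilon,\ |u-v|<\epsilon\,\}$ lies in $U$; in the $(x,y)$-coordinates $R$ is the coordinate rectangle $(-\epsilon,\epsilon)\times(-\epsilon,\epsilon)$, whose level sets in the $x$- and $y$-directions are intervals, and therefore on $R$ the function $P$ depends on $x$ alone and $Q$ on $y$ alone. Writing $P=P(u+v)$ and $Q=Q(u-v)$ as smooth functions on $(-\epsilon,\epsilon)$ and setting $\phi_1(t):=P(t)-P(0)$, $\phi_2(t):=Q(t)-Q(0)$, we get smooth functions with $\phi_i(0)=0$; since $h(o)=\epsilon_1 P(0)+\epsilon_{-1}Q(0)$, this gives
\begin{equation*}
h(u,v)-h(o)=\epsilon_1\phi_1(u+v)+\epsilon_{-1}\phi_2(u-v)=\phi_1\vee\phi_2(u,v)
\end{equation*}
on $R$, the last equality being the definition \eqref{X} rewritten via \eqref{eq:e1e2}. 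This is the desired expression.

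I expect no serious obstacle. The only slightly delicate points are: (i) combining the two para-Cauchy--Riemann equations into the decoupled transport equations $P_u=P_v$ and $Q_u=-Q_v$ (equivalently, this is $h_{\bar z}=0$ read off in the basis $\epsilon_{\pm1}$, using $j\epsilon_{\pm1}=\pm\epsilon_{\pm1}$, which follow from \eqref{eq:e1e2}); and (ii) the need to shrink $U$ first to a genuine coordinate rectangle in the $(x,y)$-variables, so that ``$\partial P/\partial y\equiv0$'' really forces $P$ to be a function of $x$ alone --- on a non-rectangular domain this last step could fail. Everything else is the chain rule together with the smooth dependence of $P$ and $Q$ on $h$.
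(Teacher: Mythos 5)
Your argument is correct. Note that the paper does not actually prove Fact~\ref{fact:A1}; it is cited as ``well-known,'' so there is no proof to compare against. What you give is the standard argument: passing to the idempotent basis $\epsilon_{\pm 1}$ decouples the para-Cauchy--Riemann system into the transport equations $P_u=P_v$, $Q_u=-Q_v$, which in the characteristic coordinates $x=u+v$, $y=u-v$ say that $P$ and $Q$ are functions of one variable each (the split-complex analogue of d'Alembert's solution of the wave equation). Your attention to the two delicate points is well placed, and in particular the remark that one must first shrink to a coordinate rectangle in $(x,y)$ --- since on a non-rectangular domain ``$\partial P/\partial y\equiv 0$'' need not force $P$ to depend on $x$ alone --- is exactly the reason the statement is local; the identification of $\epsilon_1\phi_1(u+v)+\epsilon_{-1}\phi_2(u-v)$ with $\phi_1\vee\phi_2(u,v)$ in \eqref{X} is a direct computation from \eqref{eq:e1e2}. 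Nothing is missing.
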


\begin{Prop}\label{prop:c}
Let $h(z)$ be a smooth para-holomorphic function defined on
a neighborhood $U(\subset \check \C)$ of the origin $o$
satisfying $h(o)=0$.
If we set
\begin{equation}\label{eq:p1p2}
\phi_1(z):=\Pi_1(h(z)),\qquad \phi_{-1}(z):=\Pi_{-1}(h(z)),
\end{equation}
then the following assertions hold:
\begin{enumerate}
\item 
$\phi_1$ $($resp.\ $\phi_{-1})$
is a function of $x:=(u+v)/2$
$($resp.\ $y:=(u-v)/2)$,
\item $h(z)=\epsilon_{1}\phi_1(x)+\epsilon_{-1}\phi_{-1}(y)$~holds,
\item $N^2(h(z))=\phi_1(u+v)\phi_{-1}(u-v)$~holds on $U$.
\end{enumerate}
\end{Prop}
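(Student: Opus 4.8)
The plan is to reduce all three assertions to the characteristic (idempotent) decomposition of paracomplex numbers recorded in Appendix~B, together with the local normal form for para-holomorphic functions in Fact~\ref{fact:A1}. Throughout, write $h=A+jB$ with $A:=\op{Re}(h)$ and $B:=\op{Im}(h)$, both smooth on $U$. Since $\Pi_1(a+jb)=a+b$ and $\Pi_{-1}(a+jb)=a-b$, the functions in \eqref{eq:p1p2} are simply $\phi_1=A+B$ and $\phi_{-1}=A-B$.

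For (1), I would show that $\phi_1$ is annihilated by the characteristic field $\partial_u-\partial_v$ (whose flow preserves $u+v$) and that $\phi_{-1}$ is annihilated by $\partial_u+\partial_v$ (whose flow preserves $u-v$). Using the para-Cauchy--Riemann equations $A_u=B_v$ and $A_v=B_u$,
\[
(\partial_u-\partial_v)(A+B)=(A_u-A_v)+(B_u-B_v)=(A_u-A_v)-(A_u-A_v)=0,
\]
and symmetrically $(\partial_u+\partial_v)(A-B)=0$. After shrinking $U$ to a small square in the $(x,y)$-coordinates, so that its intersections with the characteristic lines are connected, it follows that $\phi_1$ depends on $(u,v)$ only through $u+v$, hence defines a smooth function of $x$, and similarly $\phi_{-1}$ defines a smooth function of $y$ (smoothness of the one-variable function being inherited by restriction to a transversal line). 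Alternatively and more quickly, one may invoke Fact~\ref{fact:A1} at $p=o$: since $h(o)=0$ it yields $h=\tilde\phi_1\vee\tilde\phi_2$ as in \eqref{X} for smooth germs $\tilde\phi_i$ with $\tilde\phi_i(0)=0$, and comparing real and imaginary parts gives $\phi_1=A+B=\tilde\phi_1(u+v)$ and $\phi_{-1}=A-B=\tilde\phi_2(u-v)$, which is (1) and moreover records the vanishing $\phi_s(0)=0$ used in Section~2.

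With (1) in hand, the remaining assertions are immediate. For (2), apply the uniqueness of the decomposition $w=\Pi_1(w)\epsilon_1+\Pi_{-1}(w)\epsilon_{-1}$ to $w=h(z)$: this gives $h(z)=\phi_1(z)\epsilon_1+\phi_{-1}(z)\epsilon_{-1}$, and substituting the identifications from (1) yields $h(z)=\epsilon_1\phi_1(x)+\epsilon_{-1}\phi_{-1}(y)$. For (3), apply \eqref{eq:c0b} to $w=h(z)$, so that $N^2(h(z))=\Pi_1(h(z))\,\Pi_{-1}(h(z))=\phi_1\phi_{-1}$, and then read off the arguments via (1). Every step is a direct unwinding of definitions, so there is no essential obstacle; the only point that needs a little care is the passage in (1) from "locally constant along characteristic lines" to "a genuine function of a single variable on $U$", which is why it is convenient either to shrink $U$ to a coordinate square or simply to quote Fact~\ref{fact:A1}.
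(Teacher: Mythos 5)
Your proof is correct and follows essentially the same route as the paper, which simply deduces (1) from Fact~\ref{fact:A1} and obtains (2) and (3) from the identities $w=\Pi_1(w)\epsilon_1+\Pi_{-1}(w)\epsilon_{-1}$ and $N^2(w)=\Pi_1(w)\Pi_{-1}(w)$ applied to $w=h(z)$. Your additional direct verification of (1) via the para-Cauchy--Riemann equations and the characteristic fields $\partial_u\mp\partial_v$ is a sound, self-contained substitute for the citation of Fact~\ref{fact:A1}, and your care about connectedness of the characteristic slices is appropriate.
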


\begin{proof}
(1) follows from Fact \ref{fact:A1}~immediately.
(2) and (3) can be proved by using formulas in the first part of this
appendix.
\end{proof}

\end{document}